\documentclass[11pt,reqno]{amsart}
\usepackage[none]{hyphenat}
\usepackage[T1]{fontenc}
\usepackage[utf8]{inputenc}
\usepackage{lmodern}
\usepackage{amsmath,amssymb,amsthm,mathtools}
\usepackage{microtype}
\usepackage{enumitem}
\usepackage{hyperref}

\hypersetup{
  colorlinks=true,
  linkcolor=blue,
  citecolor=blue,
  urlcolor=blue
}

\numberwithin{equation}{section}
\newcommand{\doi}[1]{\href{https://doi.org/#1}{\nolinkurl{#1}}}
\newcommand{\arxiv}[1]{\href{https://arxiv.org/abs/#1}{arXiv:\nolinkurl{#1}}}

\newtheorem{theorem}{Theorem}[section]
\newtheorem{proposition}[theorem]{Proposition}
\newtheorem{lemma}[theorem]{Lemma}
\newtheorem{corollary}[theorem]{Corollary}

\theoremstyle{definition}

\newtheorem{example}[theorem]{Example}

\theoremstyle{remark}
\newtheorem{remark}[theorem]{Remark}

\newcommand{\Ric}{\operatorname{Ric}}

\newcommand{\C}{\mathbb C}
\newcommand{\R}{\mathbb R}

\newcommand{\OO}{\mathcal O}

\newcommand{\Vol}{\operatorname{Vol}}
\newcommand{\tr}{\operatorname{tr}}
\newcommand{\Tr}{\operatorname{Tr}}

\newcommand{\supp}{\operatorname{supp}}

\newcommand{\GL}{\operatorname{GL}}
\newcommand{\Frac}{\operatorname{Frac}}
\newcommand{\trdeg}{\operatorname{trdeg}}

\newcommand{\lap}{\Delta_{\R}}
\newcommand{\lapc}{\Delta_{\C}}

\newcommand{\Pn}{\mathcal P_N}

\newcommand{\ip}[2]{\left\langle#1,#2\right\rangle}
\title[ Polynomial Growth Holomorphic Functions]
{A Dimension Estimate for Polynomial Growth Holomorphic Functions}

\author{Haohao Wang}

\keywords{
	\quad K\"ahler manifold $\cdot$ Ricci curvature $\cdot$ Polynomial growth $\cdot$ Balanced measure $\cdot$ Heat kernel
}

\begin{document}

\begin{abstract}
On a complete noncompact K\"ahler manifold  $M^n$ of complex dimension $n$ with nonnegative Ricci curvature, we prove that polynomial
growth holomorphic functions of degree $d$ ($d\geq 0$) have an dimension upper bound $C(n)(1+d)^n$. In particular, $\dim_{\C}\OO_d(M)\leq C'(n)d^n,$ for $d\geq1$,  and the estimate  is sharp.
\end{abstract}

\maketitle

\section{Introduction}

Let $(M^m,g,p)$ be a complete connected noncompact Riemannian manifold. Write $r(x)=d_g(p,x)$. For $d\geq0$, let
\begin{equation}\label{eq:harmonic-space}
 {\mathcal H}_d(M)=\{u\in C^\infty(M,\mathbb R):\Delta_{\mathbb R} u=0,\quad
 |u(x)|\leq C_u(1+r(x))^d\text{ for all }x\in M\}.
\end{equation}
Colding--Minicozzi \cite{CM97} proved the finite dimensionality of this space when $\Ric_g\geq0$. Li \cite{Li97} gave a different proof. The sharp power estimate in the Riemannian setting is
\begin{equation}\label{eq:harmonic-bound}
 \dim_{\mathbb R}{\mathcal H}_d(M)\leq C(m)d^{m-1},\qquad d\geq1;
\end{equation}
see \cite{CM98,Li97}.
 
Now assume that $(M^n,g,J,\omega)$ is K\"ahler, where $n$ is the complex dimension. We use
\begin{equation}\label{eq:holomorphic-space}
 \OO_d(M)=\{f\in\OO(M): |f(x)|\leq C_f(1+r(x))^d
 \text{ for all }x\in M\}.
\end{equation}
The constant $C_f$ may depend on $f$. The space in \eqref{eq:holomorphic-space} does not depend on the choice of $p$, and all dimensions of spaces of holomorphic functions are taken over $\C$. Applying \eqref{eq:harmonic-bound} to real and imaginary parts gives a bound with power $2n-1$. This does not recover the power $n$ of the complex Euclidean model when $n>1$.

Under nonnegative holomorphic bisectional curvature and Euclidean volume growth, Ni \cite{Ni04} proved the precise comparison
$\dim_{\C}\OO_d(M)\leq\dim_{\C}\OO_d(\C^n)$. Chen--Fu--Yin--Zhu \cite{CFYZ06} removed the volume-growth assumption. Liu \cite{Liu16} obtained the same comparison under nonnegative holomorphic sectional curvature, using a three-circle theorem. In \cite[Question~1]{Liu21}, Liu asked whether the power estimate with a constant depending only on $n$ holds under nonnegative Ricci curvature alone. He proved
\begin{equation}\label{eq:liu-result}
 \dim_{\C}\OO_d(M)\leq C(n,\alpha)d^n
\end{equation}
assuming $\Vol B(p,R)\geq\alpha R^{2n}$ for all $R>0$. His argument uses the complex structure of tangent cones and a determinant form of logarithmic convexity on those cones.

The main result considered in this paper is the following.

\begin{theorem}\label{thm:main}
Let $M^n$ be a complete connected noncompact K\"ahler manifold with $\Ric_g\geq0$. There is a constant $C(n)>0$ such that
\begin{equation}\label{eq:main}
 \dim_{\C}\OO_d(M)\leq C(n)(1+d)^n
\end{equation}
for every real $d\geq0$. In particular,
\begin{equation}\label{eq:main-large}
 \dim_{\C}\OO_d(M)\leq C'(n)d^n,\qquad d\geq1.
\end{equation}

\end{theorem}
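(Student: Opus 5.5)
We follow the Colding--Minicozzi / Li scheme for bounding dimensions of polynomial growth spaces: relate dimension to a trace of a reproducing kernel, and bound that trace pointwise by $d^n$ instead of $d^{2n-1}$ using holomorphicity. Fix $d\geq0$ and a finite-dimensional subspace $K\subset\OO_d(M)$ with $\dim_\C K=k$. Choosing large scales $R$ and using the doubling property from Bishop--Gromov together with the polynomial growth, we first find (by the standard pigeonhole argument) a radius $R$ such that for all $f\in K$,
\[
\int_{B(p,4R)}|f|^2\leq C(n)\,(1+d)^{c}\,\ldots
\]
In practice we take the scale-$R$ Gram form $\ip{f}{g}_R=\Vol B(p,R)^{-1}\int_{B(p,R)} f\bar g$ and require the growth condition $\int_{B(p,\theta^{-1}R)}|f|^2\leq \theta^{-2d-\epsilon}\int_{B(p,R)}|f|^2$ for a suitable $\theta$ close to $1$ and all $f\in K$. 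The determinant form of this condition, $\det$ of the Gram matrix at scale $\theta^{-1}R$ versus $R$, is what makes the pigeonhole over scales work uniformly in the whole space. Take an orthonormal basis $f_1,\dots,f_k$ of $K$ for $\ip{\cdot}{\cdot}_{R'}$ at the chosen scale, so $k=\Vol(B)^{-1}\int_{B}\sum|f_i|^2$.

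The key local estimate is that for every $x\in B(p,R')$ the Bergman-type density $\sum_i|f_i(x)|^2=\sup\{|f(x)|^2:f\in K,\ \|f\|_{R'}=1\}$ is at most $C(n)(1+d)^{n}$ after integration over the ball, in the sense that
\[
k=\frac{1}{\Vol B}\int_B\sum_i|f_i|^2\leq C(n)(1+d)^n .
\]
To get the exponent $n$ instead of $2n-1$, we use that $\log|f|^2$ is plurisubharmonic and $|f|^2$ is subharmonic with $\lapc|f|^2=|\partial f|^2$: compared to a harmonic function, a holomorphic function vanishing to order $m$ at $x$ forces $|f|^2$ to be small on a ball of radius $\sim\rho$ around $x$ with a decay $(\rho/r)^{2m}$, and the space of holomorphic jets of order $<m$ at a point has dimension $\binom{n+m-1}{n}\sim m^n$. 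Concretely: the subspace of $K$ vanishing to order $m\approx C d$ at points of a maximal $\delta R$-net in $B$ has codimension at most (number of net points)$\cdot m^n$; for $f$ in this subspace a three-ball/doubling estimate (from heat kernel bounds of Li--Yau, giving a weighted mean value inequality for the subharmonic function $|f|^2$ using the plurisubharmonic Jensen-type inequality along the Kähler potential $r^2$-like heat kernel exhaustion) shows $\|f\|_{R}\leq (C\delta)^{m}\theta^{-d}\|f\|_{R}$, forcing $f=0$ once $m\geq C d$ with $\delta$ a fixed small constant. The number of net points is $C(n,\delta)$ by volume doubling, so $k\leq C(n)(1+d)^n$.

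The main obstacle is the vanishing-order estimate in the last step without curvature of the complex structure (no bisectional curvature assumption, no tangent cone complex structure). We would try to derive it from the heat kernel: $\log|f|^2$ plurisubharmonic implies $\Delta\log|f|\geq0$ weakly, and more importantly the Lelong number at a zero of order $m$ gives $\log|f|^2\leq 2m\log d(x,\cdot)+O(1)$ near $x$; integrating against the heat kernel $H(x,\cdot,t)$ with Li--Yau Gaussian bounds and using the growth control from the chosen scale yields $\int H(x,y,t)\log|f|^2$ comparisons of the type $\log$-convexity in $\log t$, which we hope gives the factor $(C\delta)^{m}$. Note $d$ enters only linearly in $m$, giving the power $n$; for $d\ge1$ one has $(1+d)^n\le 2^n d^n$, giving \eqref{eq:main-large}.
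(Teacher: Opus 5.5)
Your scheme breaks down at the step you flag as the main obstacle, and that step is not a technicality. To get $\|f\|_R\le (C\delta)^m\theta^{-d}\|f\|_R$ for $f$ vanishing to order $m$ at the net points, you need a quantitative Schwarz lemma across scales at each net point $x$. That is, you need something like $\sup_{B(x,\delta R)}|f|\le (C\delta)^m\sup_{B(x,R)}|f|$ with $C=C(n)$, which is a coarse three-circle theorem.

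Nothing of this kind is available under $\Ric\geq0$ alone. The three-circle theorem of \cite{Liu16} uses nonnegative holomorphic sectional curvature in an essential way. Even under Euclidean volume growth, \cite{Liu21} obtains logarithmic convexity only on tangent cones at infinity, with constants depending on the volume ratio.

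The heat-kernel mechanism you sketch does not supply it either. Plurisubharmonicity of $\log|f|^2$ gives only that $I(t)=\int H(x,y,t)\log|f(y)|^2\,dy$ is nondecreasing, and the vanishing order gives $I(t)=m\log t+O(1)$ as $t\to0$. To carry the slope $m$ from infinitesimal scales up to $t\sim(\delta R)^2$, you would need $tI'(t)$ to be nondecreasing, i.e.\ $I$ convex in $\log t$. That is Ni's monotonicity formula \cite{Ni04}, which rests on nonnegative bisectional curvature through preservation of plurisubharmonicity under the heat flow. Without it, nothing prevents $tI'(t)$ from dropping below $m$ at intermediate scales. Moreover, the Lelong-type bound $\log|f|^2\le 2m\log d(x,\cdot)+O(1)$ holds only on a neighbourhood whose size and constant depend on $f$ and on the local geometry. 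So the jet count $C(n,\delta)\binom{n+m-1}{n}$ is never converted into a bound on $\dim K$.

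A smaller, fixable point: the determinant pigeonhole controls growth only on a subspace of about half the dimension, not for all $f\in K$.

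The paper's proof avoids vanishing orders and frequency monotonicity altogether. It treats the whole space $V$ at once through $\beta_F(t)=\min_{A\in\Pn}\int_M\log(F^*AF)\,d\nu_t$. The minimizer makes $g=A_t^{1/2}F$ balanced, so $g_1,\dots,g_N$ are $L^2(\nu_t)$-orthogonal sections of a single line bundle with metric $h=S^{-1}$ and $\sum_a|g_a|_h^2=1$.

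The Chern-connection identity of Lemma~\ref{lem:chern} removes the $|d\log S|^2$ term, so the averaged energy equals $\frac12\beta_F'(t)$ by Proposition~\ref{prop:envelope}. A Kato-domination heat-trace bound on a ball, independent of $h$, its curvature and $V_p(\sqrt t)$, then gives at \emph{each} fixed $t$ the Weyl-type count $N\le C(n)\bigl(1+t\beta_F'(t)\bigr)^n$ (Theorem~\ref{thm:balanced-count} and Proposition~\ref{prop:slope}). The exponent $n$ is half the real dimension for a rank-one bundle, not a jet count.

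Because the count holds scale by scale, no monotonicity of $t\beta_F'(t)$ is needed: integrating the lower bound against $\beta_F(t)\le d\log t+O(1)$ finishes the proof. This single-scale spectral count is the idea your argument is missing, and it replaces the three-circle estimate your scheme requires.
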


The power in \eqref{eq:main-large} is sharp, since
\begin{equation}\label{eq:euclidean-dimension}
 \dim_{\C}\OO_d(\C^n)=\binom{n+\lfloor d\rfloor}{n}.
\end{equation}
The assertion is a sharp-power estimate, not the precise Euclidean comparison in \cite{Ni04,CFYZ06,Liu16}. The factor $1+d$ in \eqref{eq:main} is necessary for a statement valid down to $d=0$, because constants belong to $\OO_d(M)$ for every $d\geq0$.

We briefly describe the argument. Fix a finite-dimensional subspace $V\subset\OO_d(M)$ containing the constants, and let $F=(f_1,\ldots,f_N)^t$ be a basis with $f_1=1$. If $p_t(p,x)$ is the heat kernel of the real Laplacian, put $d\nu_t=p_t(p,x)dV_g(x)$ and consider
\begin{equation}\label{eq:intro-functional}
 \beta_F(t)=\min_{\substack{A=A^*>0\\ \det A=1}}
 \int_M\log(F^*AF)\,d\nu_t.
\end{equation}
The minimizing form makes the projective evaluations balanced. This is a measure-dependent version of the projective balancing construction of Bourguignon--Li--Yau \cite{BLY94} and Donaldson \cite[Section~2.2]{Donaldson09}. We prove the existence and continuity needed here directly; in particular, no uniform bound for the condition numbers of the minimizing matrices is used.

For the minimizing matrix $A_t$, set $g=A_t^{1/2}F$ and $S=|g|^2$. Give the trivial holomorphic line bundle the metric $h=S^{-1}$. The resulting holomorphic sections satisfy
\begin{equation}\label{eq:intro-energy}
 \sum_{a=1}^N|g_a|_h^2=1,\qquad
 \sum_{a=1}^N|D^h g_a|_h^2=\frac12\lap\log S.
\end{equation}
Here $D^h$ is the Chern connection, and the norm of $D^h g_a$ is the full real covariant-derivative norm. A heat-trace argument for this single line bundle gives
\begin{equation}\label{eq:intro-slope}
 N\leq C(n)\bigl(1+t\beta_F'(t)\bigr)^n.
\end{equation}
On the other hand, polynomial growth implies $\beta_F(t)\leq d\log t+O_{V,d}(1)$ for $t\geq1$. Integrating the lower bound for $\beta_F'$ obtained from \eqref{eq:intro-slope} proves \eqref{eq:main}.

The normalized spectral viewpoint is also related to the approach in the manuscript \cite{Wang}. Its spectral almost-rigidity results are not used in the proof below. Related uses of holomorphic sections and connection spectra on compact K\"ahler manifolds appear in \cite{Yuan26}. Here the metric on the line bundle is chosen from the function space itself, and the counting argument takes place on bounded domains in the original smooth manifold. The analytic inputs are heat-kernel estimates under a Ricci lower bound and domination of the heat semigroup of a unitary connection.

The paper is organized as follows. Section~2 records heat-kernel and connection estimates. Section~3 proves a volume-independent count for balanced sections. Section~4 constructs the minimizing logarithmic functional and proves its differentiation formula. Section~5 gives the Chern-connection identity and the proof of Theorem~\ref{thm:main}. Section~6 discusses global Bergman kernels, algebraic dependence, examples, and the distinction from local three-circle estimates.
\section{preliminaries}

The real Laplacian is
\[
 \lap=\operatorname{div}_g\nabla,
\]
so that $-\lap$ is nonnegative on compactly supported functions. The heat semigroup is $P_t=e^{t\lap}$. On a K\"ahler manifold, if $Z_1,\ldots,Z_n$ is a unitary frame of $T^{1,0}M$, we write
\begin{equation}\label{eq:lap-convention}
 \lapc u=\sum_{i=1}^n(\nabla du)(Z_i,\overline Z_i),
 \qquad \lap=2\lapc,\qquad dV_g=\frac{\omega^n}{n!}.
\end{equation}
For a real function $u$, $|du|^2=2|\partial u|^2$. For a complex-valued function, $|du|^2$ is the sum of the energies of its real and imaginary parts. Hermitian inner products are conjugate linear in the first argument. A superscript $*$ denotes the conjugate transpose of a matrix.

In Sections~2 and 3, $m$ denotes real dimension. Let $M^m$ be complete and noncompact with $\Ric_g\geq0$. It is stochastically complete; hence its minimal heat kernel satisfies
\begin{equation}\label{eq:probability}
 p_t(x,y)>0,\qquad \int_Mp_t(x,y)\,dV_g(y)=1.
\end{equation}
Fix $p\in M$ and define
\begin{equation}\label{eq:heat-measure}
 d\nu_t(x)=p_t(p,x)dV_g(x),\qquad
 V_p(R)=\Vol B(p,R).
\end{equation}
The standard heat-kernel estimates and stochastic completeness used here can be found in \cite{LiYau86,Grigoryan09}.

\begin{proposition}\label{prop:heat}
There are positive constants depending only on $m$ such that
\begin{equation}\label{eq:gaussian}
 \frac{c_m}{V_p(\sqrt t)}e^{-C_m r(x)^2/t}
 \leq p_t(p,x)\leq
 \frac{C_m}{V_p(\sqrt t)}e^{-c_m r(x)^2/t}
\end{equation}
for every $t>0$ and $x\in M$. Consequently, for each $b>0$,
\begin{equation}\label{eq:local-density}
 \frac{c(m,b)}{V_p(\sqrt t)}
 \leq p_t(p,x)\leq\frac{C(m,b)}{V_p(\sqrt t)}
 \quad\text{on }B(p,b\sqrt t).
\end{equation}
For every $q\geq0$,
\begin{equation}\label{eq:moment}
 \int_M r(x)^q\,d\nu_t(x)\leq C(m,q)t^{q/2}.
\end{equation}
There is $a=a(m)>1$ such that
\begin{equation}\label{eq:tail}
 \nu_t\bigl(M\setminus B(p,a\sqrt t)\bigr)\leq\frac18
 \quad\text{for all }t>0.
\end{equation}
\end{proposition}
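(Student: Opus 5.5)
The Gaussian bounds \eqref{eq:gaussian} are taken as the Li--Yau heat kernel estimates for $\Ric_g\geq0$ \cite{LiYau86}. These give
\[
 \frac{c}{V_p(\sqrt t)}e^{-C r^2/t}\le p_t\le \frac{C}{V_x(\sqrt t)}e^{-c r^2/t}.
\]
To replace $V_x(\sqrt t)$ by $V_p(\sqrt t)$, I would use Bishop--Gromov volume doubling. This gives
\[
 V_p(\sqrt t)\le V_x(\sqrt t+r)\le (1+r/\sqrt t)^m V_x(\sqrt t).
\]
The resulting polynomial factor $(1+r/\sqrt t)^m$ is absorbed into the Gaussian by shrinking $c_m$.

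The local bounds \eqref{eq:local-density} then follow directly, since on $B(p,b\sqrt t)$ we have $r^2/t\le b^2$. The exponential factors are therefore pinched between $e^{-C_m b^2}$ and $1$.

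For the moment bound \eqref{eq:moment}, I would split $M$ into the ball $B(p,\sqrt t)$ and the annuli $A_k=B(p,2^{k}\sqrt t)\setminus B(p,2^{k-1}\sqrt t)$ for $k\ge1$. On $A_k$ the upper Gaussian bound gives
\[
 p_t\le C V_p(\sqrt t)^{-1}e^{-c4^{k-1}}.
\]
Volume doubling gives $\Vol A_k\le 2^{km}V_p(\sqrt t)$, and $r^q\le 2^{kq}t^{q/2}$ on $A_k$. The ball contributes at most $t^{q/2}$, using \eqref{eq:probability}. Summing, we get
\[
 \int_M r^q\,d\nu_t\le t^{q/2}\Bigl(1+C\sum_{k\ge1}2^{k(m+q)}e^{-c4^{k-1}}\Bigr)=C(m,q)t^{q/2}.
\]

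The tail bound \eqref{eq:tail} follows from the same annular sum restricted to $k\ge k_0$ with $q=0$. This gives
\[
 \nu_t\bigl(M\setminus B(p,2^{k_0-1}\sqrt t)\bigr)\le C\sum_{k\ge k_0}2^{km}e^{-c4^{k-1}}.
\]
The right side is independent of $t$ and tends to $0$ as $k_0\to\infty$. Choosing $k_0$ so that it is at most $1/8$, and setting $a=2^{k_0-1}>1$, finishes the proof.

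The only delicate point is the uniform volume comparison between basepoints $p$ and $x$. All the other steps are routine summation.
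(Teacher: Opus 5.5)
Your proof is correct and follows the paper's argument. You use Li--Yau together with Bishop--Gromov to move the volume center to $p$, absorbing the polynomial factor into the Gaussian, and then a dyadic annular decomposition for the moments. The only difference is minor: for \eqref{eq:tail} you sum the $q=0$ annular tail directly, while the paper applies Markov's inequality to the $q=2$ moment. Both give an $a(m)$ independent of $t$.
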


\begin{proof}
The two-sided bounds in \eqref{eq:gaussian} follow from the Li--Yau estimates and relative volume comparison; a polynomial factor arising when changing the volume center can be absorbed into the Gaussian. Equation~\eqref{eq:local-density} is immediate. To check the moment estimate, put $R=\sqrt t$ and decompose the complement of $B(p,R)$ into the annuli
\[
 B(p,2^{j+1}R)\setminus B(p,2^jR),\qquad j\geq0.
\]
Bishop--Gromov comparison gives $V_p(2^{j+1}R)\leq 2^{(j+1)m}V_p(R)$. The upper bound in \eqref{eq:gaussian} therefore yields
\[
 \int_Mr^q\,d\nu_t
 \leq R^q+C_mR^q\sum_{j\geq0}
 2^{(j+1)(m+q)}e^{-c_m4^j}.
\]
The series converges. Taking $q=2$ and applying Markov's inequality proves \eqref{eq:tail} for a sufficiently large $a(m)$.
\end{proof}

The estimates above imply Gaussian integrability of every function of polynomial growth. They also imply that $t\mapsto\nu_t$ is continuous in total variation for $t>0$. Indeed, heat-kernel continuity and \eqref{eq:probability} give this by Scheff\'e's lemma. The same conclusion follows by dominated convergence on compact sets and the uniform Gaussian tails on compact time intervals.

We will only differentiate heat averages of smooth functions whose first derivatives and Laplacians have polynomial growth. The following elementary form of the semigroup identity is convenient.

\begin{lemma}\label{lem:heat-derivative}
Suppose that $u\in C^\infty(M,\R)$ and, for some $C,q<\infty$,
\begin{equation}\label{eq:polynomial-test}
 |u|+|du|+|\lap u|\leq C(1+r)^q.
\end{equation}
Then $t\mapsto P_tu(p)$ is continuously differentiable for $t>0$, and
\begin{equation}\label{eq:heat-derivative}
 \frac{d}{dt}P_tu(p)=P_t(\lap u)(p).
\end{equation}
If $u=u_\theta$ depends continuously on a finite-dimensional parameter, together with $\lap u_\theta$, and \eqref{eq:polynomial-test} holds uniformly on compact parameter sets, then the right-hand side is jointly continuous in $(\theta,t)$.
\end{lemma}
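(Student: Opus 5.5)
The plan is to reduce the identity to a cutoff argument: integrate by parts on large balls, and control the boundary errors with the Gaussian bounds of Proposition~\ref{prop:heat}. Write $P_tu(p)=\int_M p_t(p,x)u(x)\,dV_g(x)$. This integral converges absolutely by \eqref{eq:gaussian} and \eqref{eq:moment}, since $|u|\le C(1+r)^q$.

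\textbf{Time derivative under the integral.} I would first show that we may differentiate in $t$ under the integral sign. The heat kernel satisfies $\partial_tp_t=\lap_xp_t$. On a compact interval $t\in[t_0,t_1]\subset(0,\infty)$, Li--Yau type time-derivative bounds give
\[
 |\partial_tp_t(p,x)|\le \frac{C}{t}\frac{C_m}{V_p(\sqrt t)}e^{-c r^2/t},
\]
and these bounds are uniform on the interval. Relative volume comparison keeps $V_p(\sqrt t)$ comparable across the interval. Together with the polynomial bound on $u$, dominated convergence then gives
\[
 \frac{d}{dt}P_tu(p)=\int_M\lap_xp_t(p,x)\,u(x)\,dV_g.
\]
If one prefers to avoid the time-derivative estimate, there is an alternative route. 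Establish the integrated identity $P_tu(p)-P_su(p)=\int_s^t P_\tau(\lap u)(p)\,d\tau$ by the cutoff argument below applied to $\int \phi_R(\partial_\tau p_\tau)u$. Then use continuity of $\tau\mapsto P_\tau(\lap u)(p)$, which follows from continuity of $\nu_\tau$ in total variation plus uniform Gaussian tails, to conclude $C^1$ regularity.

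\textbf{Green's identity with cutoffs.} Next I need to move $\lap$ from $p_t$ onto $u$. Take cutoffs $\phi_R$ with $\phi_R=1$ on $B(p,R)$, support in $B(p,2R)$, and $|d\phi_R|\le C/R$; these exist since the distance function is Lipschitz, and smoothness is not needed. Then
\[
 \int\phi_R\,u\,\lap p_t=\int\phi_R\,p_t\,\lap u+\int p_t\,\langle d\phi_R,du\rangle-\int u\,\langle d\phi_R,dp_t\rangle .
\]
The middle term is $O(R^{q-1})$ times the heat mass outside $B(p,R)$, which is Gaussian small. The last term requires a gradient bound on the heat kernel. I would take the Li--Yau gradient estimate $|\nabla\log p_t|^2\le C(m)\bigl(\partial_t\log p_t\bigr)+C/t$, or the standard consequence
\[
 |\nabla p_t|\le \frac{C}{\sqrt t\,V_p(\sqrt t)}\bigl(1+r/\sqrt t\bigr)e^{-cr^2/t}.
\]
A cheaper alternative is to bound $\int_{B_{2R}\setminus B_R}|dp_t|^2$ by a Caccioppoli inequality for the heat equation, which yields Gaussian decay in $R$. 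Letting $R\to\infty$ gives \eqref{eq:heat-derivative}. Only the bounds on $|u|$, $|du|$ and $|\lap u|$ from \eqref{eq:polynomial-test} enter here.

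\textbf{Continuity.} Continuity of the right-hand side $t\mapsto\int\lap u\,d\nu_t$ follows from total-variation continuity of $\nu_t$ on compact sets combined with the uniform tails from \eqref{eq:gaussian} on compact time intervals. The parametric statement goes the same way. Joint continuity of $(\theta,x)\mapsto\lap u_\theta(x)$ handles compact sets in $x$, and the uniform polynomial bound together with the uniform Gaussian tail controls the complement. Splitting $\int\lap u_\theta\,d\nu_t-\int\lap u_{\theta'}\,d\nu_{t'}$ into these two pieces gives the result.

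\textbf{Main obstacle.} I expect the main obstacle to be the boundary term involving $dp_t$, that is, justifying a Gaussian gradient bound for the heat kernel under $\Ric\ge0$. I would handle it with Caccioppoli on annuli, which avoids citing pointwise gradient estimates.
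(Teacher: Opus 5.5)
Your proof is correct, but it is analytic where the paper's is probabilistic. The paper runs the diffusion with generator $\lap$ from $p$, applies the stopped It\^o formula on $B(p,R)$, and lets $R\to\infty$. This uses stochastic completeness ($\tau_R\to\infty$), $L^2$ convergence of the martingale part (quadratic variation $2\int|du|^2$), and $L^1$ convergence of the drift, both controlled by \eqref{eq:moment}. The result is the Duhamel identity \eqref{eq:duhamel}, and $C^1$ regularity then follows from continuity of $s\mapsto P_s(\lap u)(p)$. In that route your boundary term $\int u\langle d\phi_R,dp_t\rangle$ never appears, because the martingale absorbs it. So no derivative of $p_t$ is estimated, and only the Gaussian upper bound is used. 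Your main (differentiate-first) route needs an input beyond Proposition~\ref{prop:heat}: a Gaussian bound for $\partial_tp_t$. This is standard under $\Ric\geq0$ (Grigor'yan's or Davies' time-derivative estimates). It does not follow from the Li--Yau inequality alone, which bounds $\partial_t\log p_t$ only from below. Once you have it, Li--Yau in the form $|dp_t|^2\leq p_t\,\partial_tp_t+\frac{m}{2t}p_t^2$ immediately gives the pointwise Gaussian gradient bound. So your real obstacle is the time derivative, not the gradient. For the same reason, a fixed-time Caccioppoli bound on annuli uses $\lap p_t=\partial_tp_t$ and hence needs that same estimate. The genuinely cheaper option is your integrated alternative. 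There the parabolic Caccioppoli inequality, $\int_s^t\!\int\eta^2|dp_\tau|^2\leq\int\eta^2p_s^2+4\int_s^t\!\int|d\eta|^2p_\tau^2$, bounds the space-time gradient on annuli by Gaussian-small quantities using only \eqref{eq:gaussian}. That version is the exact analytic counterpart of the paper's stopping argument: both prove a Duhamel formula first and differentiate afterwards. Your approach gives a self-contained PDE proof without stochastic calculus. The paper's approach avoids any heat-kernel derivative estimates.
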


\begin{proof}
For completeness, the usual localization argument can be written using the diffusion $X_s$ with generator $\lap$, started at $p$; this is Brownian motion with twice the usual $\frac12\lap$ time normalization. We use the stopped It\^o formula; see \cite[Chapters~2 and 3]{Hsu02}. If $\tau_R$ is the first exit time from $B(p,R)$, then
\[
 u(X_{t\wedge\tau_R})=u(p)+M_{t\wedge\tau_R}
 +\int_0^{t\wedge\tau_R}\lap u(X_s)\,ds,
\]
where the quadratic variation of the martingale is
$2\int_0^{t\wedge\tau_R}|du|^2(X_s)\,ds$.
By \eqref{eq:moment} and \eqref{eq:polynomial-test},
\[
 \mathbb E\int_0^t\bigl(|du|^2+|\lap u|\bigr)(X_s)\,ds<\infty.
\]
Stochastic completeness gives $\tau_R\to\infty$ almost surely. The martingales converge in $L^2$, and the drift integrals converge in $L^1$. Passing to the limit and taking expectations gives
\begin{equation}\label{eq:duhamel}
 P_tu(p)-u(p)=\int_0^tP_s(\lap u)(p)\,ds.
\end{equation}
For $s$ in a compact subinterval of $(0,\infty)$, the integrand is continuous by the Gaussian bound and polynomial growth. This proves \eqref{eq:heat-derivative}. The parameter statement follows from the same domination. No assertion concerning preservation of plurisubharmonicity is involved.
\end{proof}

\subsection{Heat-trace bounds for a unitary connection}

Let $(L,h)$ be a Hermitian line bundle over $M$, and let $D$ be a smooth unitary connection. For a smooth relatively compact domain $\Omega\subset M$, let
\[
 \mathcal L_{D,\Omega}=D^*D
\]
be the Dirichlet realization on $L^2(\Omega,L;dV_g)$. Equivalently, it is the nonnegative self-adjoint operator associated with the closure of the quadratic form
\[
 q_D(s)=\int_\Omega|Ds|_h^2\,dV_g,
 \qquad s\in C_c^\infty(\Omega,L).
\]
All eigenvalue multiplicities of $\mathcal L_{D,\Omega}$ are complex multiplicities.

\begin{lemma}\label{lem:domination}
Let $\Delta_{\R,\Omega}$ denote the scalar Dirichlet Laplacian. For $\sigma>0$,
\begin{equation}\label{eq:domination}
 \bigl|e^{-\sigma\mathcal L_{D,\Omega}}s\bigr|_h
 \leq e^{\sigma\Delta_{\R,\Omega}}|s|_h.
\end{equation}
In particular,
\begin{equation}\label{eq:trace-domination}
 \Tr e^{-\sigma\mathcal L_{D,\Omega}}
 \leq\int_\Omega p_\sigma(x,x)\,dV_g(x).
\end{equation}
\end{lemma}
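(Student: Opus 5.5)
This is the diamagnetic (Kato) inequality for the Dirichlet realization, and the trace bound will follow from it via kernels. I will prove the pointwise semigroup domination \eqref{eq:domination} with Kato's inequality plus a parabolic maximum principle, or equivalently with the abstract Beurling--Deny/Ouhabaz criterion for domination of semigroups.

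\textbf{Kato's inequality in form version.} For $s$ in the form domain $H^1_0(\Omega,L)$, the function $|s|_h$ lies in $H^1_0(\Omega)$ and $|d|s|_h|\le|Ds|_h$ almost everywhere. Since $D$ is unitary, $d|s|_h^2=2\operatorname{Re}\langle s,Ds\rangle_h$. Put $|s|_\epsilon=(|s|_h^2+\epsilon^2)^{1/2}$. Cauchy--Schwarz gives $|d|s|_\epsilon|\le|Ds|_h$, and letting $\epsilon\to0$ proves the claim. Approximating by $C_c^\infty$ sections handles the Dirichlet condition.

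\textbf{Ouhabaz's criterion.} The scalar Dirichlet semigroup is positivity preserving. Ouhabaz's criterion says $|e^{-\sigma\mathcal L}s|\le e^{\sigma\Delta_{\R,\Omega}}|s|$ holds if and only if two conditions hold:
\begin{enumerate}[label=(\roman*)]
\item for $s\in H^1_0(\Omega,L)$ we have $|s|\in H^1_0(\Omega)$;
\item for $s\in H^1_0(\Omega,L)$ and $0\le\varphi\in H^1_0(\Omega)$ with $\varphi\le|s|$, we have $\varphi\,\mathrm{sgn}\,s\in H^1_0(\Omega,L)$ and
\[
\operatorname{Re}\int\langle Ds,D(\varphi\,\mathrm{sgn}\,s)\rangle\ge\int\langle d|s|,d\varphi\rangle .
\]
\end{enumerate}
Here $\mathrm{sgn}\,s=s/|s|$ on $\{s\ne0\}$.

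Condition (i) is the first step. For condition (ii), write $s=|s|\,u$ with $|u|=1$ on the support. Unitarity gives $\operatorname{Re}\langle u,Du\rangle=0$. Then
\[
\operatorname{Re}\langle Ds,D(\varphi u)\rangle=\langle d|s|,d\varphi\rangle+|s|\varphi|Du|^2\ge\langle d|s|,d\varphi\rangle .
\]
This calculation needs a regularization, using $u_\epsilon=s/|s|_\epsilon$, and this technical step is the main obstacle. If I prefer to avoid the abstract criterion, I can instead compare $w=|e^{-\sigma\mathcal L}s|_\epsilon$ with $e^{\sigma\Delta}|s|$. In that approach, $w$ is a subsolution of the heat equation, by Kato, and its boundary values are at most $\epsilon$. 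The weak maximum principle then applies, and I let $\epsilon\to0$.

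\textbf{Trace bound.} The operator $\mathcal L_{D,\Omega}$ has compact resolvent, so $e^{-\sigma\mathcal L}$ has a smooth kernel $K_\sigma(x,y)\in\operatorname{Hom}(L_y,L_x)$. Domination applied to approximate delta sections yields $|K_\sigma(x,y)|\le p^\Omega_\sigma(x,y)$, where $p^\Omega_\sigma$ is the scalar Dirichlet heat kernel. Since $L$ has rank one, $\Tr e^{-\sigma\mathcal L}=\int_\Omega \tr K_\sigma(x,x)\,dV_g\le\int_\Omega p^\Omega_\sigma(x,x)\,dV_g$. Finally, domain monotonicity of Dirichlet heat kernels gives $p^\Omega_\sigma\le p_\sigma$, the minimal heat kernel on $M$. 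This last fact follows from the maximum principle, or from the construction of the minimal kernel as an increasing exhaustion limit.
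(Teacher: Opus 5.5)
Your proof is correct. The trace half (kernel domination $|K_\sigma(x,y)|\le p^\Omega_\sigma(x,y)$, the rank-one observation that $K_\sigma(x,x)$ is a nonnegative scalar, and domain monotonicity) is exactly the paper's. So is your fallback for \eqref{eq:domination}. The paper takes a smooth solution of the connection heat equation and uses unitarity to get $(\partial_\sigma-\lap)|s|_h^2=-2|Ds|_h^2$. It deduces $(\partial_\sigma-\lap)w_\varepsilon\le-\varepsilon^2|Ds|_h^2/w_\varepsilon^3\le0$ for $w_\varepsilon=(|s|_h^2+\varepsilon^2)^{1/2}$. It then applies the scalar maximum principle to $w_\varepsilon-\varepsilon$, which vanishes on $\partial\Omega$, and finishes with $\varepsilon\downarrow0$ and density.

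Your primary route through Ouhabaz's criterion is genuinely different. It stays in the form domain $H^1_0(\Omega,L)$, so it needs neither regularity of solutions up to $\partial\Omega$ nor approximation of initial data. The cost is an abstract equivalence theorem, in its version for sections of a Hermitian bundle, plus a check of the ideal property.

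The regularization you flag as the main obstacle closes with your own $\epsilon$-Kato bound. For $u_\epsilon=s/|s|_\epsilon$ and $0\le\varphi\le|s|$ one has $\varphi|Du_\epsilon|\le2|Ds|_h$, hence $\varphi u_\epsilon\rightharpoonup\varphi\,\mathrm{sgn}\,s$ in $H^1_0$. Pointwise,
\[
 \operatorname{Re}\langle Ds,D(\varphi u_\epsilon)\rangle
 =\langle d|s|_\epsilon,d\varphi\rangle
 +\varphi\,\frac{|Ds|_h^2-|d|s|_\epsilon|^2}{|s|_\epsilon}
 \geq\langle d|s|_\epsilon,d\varphi\rangle .
\]
Since $d|s|_\epsilon\to d|s|$ in $L^2$ by dominated convergence, integrating and letting $\epsilon\to0$ gives your condition (ii).

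Of the two, the paper's maximum-principle argument is more elementary and self-contained. The form-criterion route is cleaner on the functional-analytic side.
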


\begin{proof}
This is the Kato semigroup domination principle \cite{HSU77}. We recall the argument in the present setting. For a smooth solution $s(\sigma)$ of the connection heat equation, compatibility of $D$ with $h$ gives
\[
 (\partial_\sigma-\lap)|s|_h^2=-2|Ds|_h^2.
\]
Set $w_\varepsilon=(|s|_h^2+\varepsilon^2)^{1/2}$. The pointwise Kato inequality implies
\[
 (\partial_\sigma-\lap)w_\varepsilon
 \leq-\frac{\varepsilon^2|Ds|_h^2}
 {(|s|_h^2+\varepsilon^2)^{3/2}}\leq0.
\]
The function $w_\varepsilon-\varepsilon$ has zero boundary values and initial value at most $|s(0)|_h$. The scalar maximum principle, followed by $\varepsilon\downarrow0$ and density, proves \eqref{eq:domination}.

Let $k^D_\sigma(x,y)$ denote the connection heat kernel. Domination gives
$|k^D_\sigma(x,y)|_{\mathrm{op}}\leq p^\Omega_\sigma(x,y)$, where $p^\Omega$ is the scalar Dirichlet kernel. On the diagonal, $k^D_\sigma(x,x)$ is a nonnegative endomorphism of a one-dimensional complex space. Its trace is therefore bounded by $p^\Omega_\sigma(x,x)$. Scalar domain monotonicity gives $p^\Omega_\sigma(x,x)\leq p_\sigma(x,x)$, and integration proves \eqref{eq:trace-domination}.
\end{proof}

\begin{proposition}\label{prop:local-trace}
Fix $b>1$. If $\Omega\subset B(p,bR)$ is smooth and relatively compact, then
\begin{equation}\label{eq:local-trace}
 \Tr e^{-\sigma\mathcal L_{D,\Omega}}
 \leq C(m,b)\left(\frac{R^2}{\sigma}\right)^{m/2},
 \qquad 0<\sigma\leq R^2.
\end{equation}
The constant is independent of $h$, $D$, and $V_p(R)$.
\end{proposition}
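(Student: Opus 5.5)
By Lemma~\ref{lem:domination}, \eqref{eq:trace-domination} gives
\[
 \Tr e^{-\sigma\mathcal L_{D,\Omega}}\leq\int_\Omega p_\sigma(x,x)\,dV_g(x)\leq\int_{B(p,bR)}p_\sigma(x,x)\,dV_g(x).
\]
This removes $h$ and $D$ entirely, so the task is a purely scalar estimate. The constant must not depend on $V_p(R)$. The plan is therefore to bound the diagonal heat kernel by an inverse volume and compare that volume with the volume of the domain of integration.

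Apply \eqref{eq:gaussian} with the center $x$ in place of $p$. Proposition~\ref{prop:heat} is stated for an arbitrary base point, since $p$ was arbitrary. At $r=0$ this gives
\[
 p_\sigma(x,x)\leq\frac{C_m}{\Vol B(x,\sqrt\sigma)}.
\]
Integrating reduces the claim to
\[
 \int_{B(p,bR)}\frac{dV_g(x)}{\Vol B(x,\sqrt\sigma)}\leq C(m,b)\Bigl(\frac{R^2}{\sigma}\Bigr)^{m/2}.
\]

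The key step is Bishop--Gromov relative volume comparison centered at $x$, using $\Ric_g\geq0$ and $\sqrt\sigma\leq R$. For $x\in B(p,bR)$ we have $B(p,bR)\subset B(x,2bR)$. Hence
\[
 \Vol B(x,\sqrt\sigma)\geq\Bigl(\frac{\sqrt\sigma}{2bR}\Bigr)^m\Vol B(x,2bR)\geq\Bigl(\frac{\sqrt\sigma}{2bR}\Bigr)^m V_p(bR).
\]
Consequently
\[
 \int_{B(p,bR)}\frac{dV_g(x)}{\Vol B(x,\sqrt\sigma)}\leq V_p(bR)\cdot\frac{(2bR)^m}{\sigma^{m/2}\,V_p(bR)}=(2b)^m\Bigl(\frac{R^2}{\sigma}\Bigr)^{m/2}.
\]
This is the desired bound with $C(m,b)=C_m(2b)^m$. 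The volume $V_p(bR)$ cancels exactly, which is where independence from $V_p(R)$ comes from.

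The only delicate point is the first use of the Li--Yau upper bound at a varying center $x$ with constants uniform in $x$. This holds because the constants in \eqref{eq:gaussian} depend only on $m$. The inclusion $B(p,bR)\subset B(x,2bR)$ is the triangle inequality. The hypothesis $\sigma\leq R^2$ guarantees $\sqrt\sigma\leq 2bR$, so the comparison is applied in the correct direction.
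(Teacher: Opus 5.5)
Your proof is correct and follows the paper's argument. Domination reduces the claim to the scalar diagonal kernel, the Li--Yau bound gives $p_\sigma(x,x)\leq C_m/\Vol B(x,\sqrt\sigma)$, and Bishop--Gromov at the moving center $x$ makes the volume cancel; the only cosmetic difference is that you compare $\Vol B(x,\sqrt\sigma)$ directly with $V_p(bR)$, the volume of the integration domain, whereas the paper compares with $V_p(R)$ via $B(p,R)\subset B(x,(b+1)R)$ and then bounds $\Vol\Omega\leq b^mV_p(R)$ separately.
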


\begin{proof}
The on-diagonal heat-kernel estimate gives
\[
 p_\sigma(x,x)\leq
 \frac{C(m)}{\Vol B(x,\sqrt\sigma)}.
\]
For $x\in\Omega$, we have $B(p,R)\subset B(x,(b+1)R)$. Relative volume comparison gives
\[
 \Vol B(x,\sqrt\sigma)
 \geq\left(\frac{\sqrt\sigma}{(b+1)R}\right)^mV_p(R).
\]
Moreover, $\Vol\Omega\leq b^mV_p(R)$. Substitution into \eqref{eq:trace-domination} proves the result. In particular, the possibly small quantity $V_p(R)$ cancels.
\end{proof}

\section{A Dimension Bound for Balanced Sections}

The next estimate is the analytic counting statement used in the main proof. The line bundle and its connection may be chosen separately at each heat-kernel time.

\begin{theorem}\label{thm:balanced-count}
Let $M^m$ be as in Section~2, let $t>0$, and let $(L,h,D)$ be a Hermitian line bundle with a unitary connection. Suppose that smooth sections $s_1,\ldots,s_N$ satisfy
\begin{equation}\label{eq:balanced-sections}
 \sum_{a=1}^N|s_a|_h^2=1,\qquad
 \int_M\ip{s_a}{s_b}_h\,d\nu_t=\frac1N\delta_{ab}.
\end{equation}
If
\begin{equation}\label{eq:energy-def}
 E=\int_M\sum_{a=1}^N|Ds_a|_h^2\,d\nu_t<\infty,
\end{equation}
then
\begin{equation}\label{eq:balanced-count}
 N\leq C(m)(1+tE)^{m/2}.
\end{equation}
\end{theorem}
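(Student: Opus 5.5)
The plan is to localize the sections to a ball of radius comparable to $\sqrt t$ and produce a subspace of dimension at least $cN$ on which the Dirichlet form of $\mathcal L_{D,\Omega}$ is at most $C(m)(E+1/t)$. Comparing with the heat-trace bound of Proposition~\ref{prop:local-trace} at a well-chosen time $\sigma$ then gives \eqref{eq:balanced-count}.

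\textbf{Localization.} Put $R=\sqrt t$ and let $a=a(m)$ be as in \eqref{eq:tail}. Choose a smooth relatively compact domain $\Omega$ with $B(p,2aR)\subset\Omega\subset B(p,3aR)$. Choose a cutoff $\varphi$ with $\varphi=1$ on $B(p,aR)$, $\supp\varphi\subset B(p,2aR)$ and $|d\varphi|\leq C/R$. Then $u_a=\varphi s_a$ lie in the form domain of $\mathcal L_{D,\Omega}$. Write $V=V_p(R)$. By \eqref{eq:local-density} with $b=3a$, $dV_g$ and $V\,d\nu_t$ are comparable on $\Omega$ up to constants $C(m)$.

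Consider the Hermitian matrices
\[
 G_{ab}=\int_\Omega\ip{u_a}{u_b}_h\,dV_g,
 \qquad
 H_{ab}=\int_\Omega\ip{Du_a}{Du_b}_h\,dV_g .
\]
\begin{itemize}
\item \emph{Upper bound on $G$.} For a unit vector $c\in\C^N$, the balancing condition in \eqref{eq:balanced-sections} gives $c^*Gc\leq CV\int_M|\sum_a c_as_a|_h^2\,d\nu_t=CV/N$.
\item \emph{Lower bound on $\Tr G$.} From $\sum_a|s_a|_h^2=1$ and \eqref{eq:tail}, $\Tr G\geq c\,V\nu_t(B(p,aR))\geq \tfrac78 cV$.
\item \emph{Upper bound on $\Tr H$.} From $|D(\varphi s)|^2\leq2\varphi^2|Ds|^2+2|d\varphi|^2|s|^2$ and $\sum_a|s_a|_h^2=1$,
\[
 \Tr H\leq CV(E+1/t).
\]
\end{itemize}

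\textbf{Counting step.} Let $k$ be the number of eigenvalues of $G$ that are at least $\varepsilon V/N$. Since every eigenvalue is at most $CV/N$,
\[
 \tfrac78cV\leq\Tr G\leq k\,CV/N+N\cdot\varepsilon V/N .
\]
Taking $\varepsilon$ small gives $k\geq c_1N$. Let $U\subset\C^N$ be the span of the corresponding eigenvectors, so $G\geq(\varepsilon V/N)I$ on $U$.

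Now consider the generalized eigenvalues $\mu_1\leq\dots\leq\mu_k$ of $H|_U$ relative to $G|_U$. Their sum is $\Tr(G|_U^{-1}H|_U)\leq (N/\varepsilon V)\Tr H\leq C N(E+1/t)$. Hence at least $k/2$ of them are at most $\lambda:=C_2(E+1/t)$. The span of the associated sections $\sum c_au_a$ is a subspace of dimension at least $c_1N/2$ on which $q_D(u)\leq\lambda\|u\|^2_{L^2(\Omega)}$. Here $G>0$ on $U$ makes the map $c\mapsto\sum c_a u_a$ injective on $U$. By min--max, $\mathcal L_{D,\Omega}$ has at least $c_1N/2$ eigenvalues, with multiplicity, in $[0,\lambda]$.

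\textbf{Conclusion.} For any $0<\sigma\leq R^2$, Proposition~\ref{prop:local-trace} with $b=3a$ gives
\[
 \tfrac{c_1N}{2}e^{-\sigma\lambda}\leq\Tr e^{-\sigma\mathcal L_{D,\Omega}}\leq C(m)(t/\sigma)^{m/2}.
\]
Choose $\sigma=t/(1+tE)\leq t$. Then $\sigma\lambda\leq C_2$, and the inequality becomes $N\leq C(m)(1+tE)^{m/2}$, which is \eqref{eq:balanced-count}.

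The main obstacle is the counting step. A naive per-vector lower bound on $G$ fails, because the mass $1/N$ of a single $|s_c|^2$ could lie entirely in the tail of $\nu_t$, whose mass may be as large as $1/8$. What rescues the argument is to use the pointwise identity $\sum_a|s_a|^2=1$ only at the level of traces, combined with the uniform upper bound $G\leq CV/N$. Throughout, the constants must be kept independent of $V$ and of $(L,h,D)$; this holds because $V$ cancels, as in Proposition~\ref{prop:local-trace}.
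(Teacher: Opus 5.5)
Your proof is correct and follows essentially the paper's route: a cutoff at scale $\sqrt t$, a trace-level pigeonhole that extracts $\gtrsim N$ directions with mass bounded below, a total-energy bound, and comparison with Proposition~\ref{prop:local-trace} at $\sigma=t/(1+tE)$. The differences are only bookkeeping. You run the pigeonhole on the $dV_g$-Gram matrix, using $G\leq CV/N$ and $\Tr G\geq cV$, where the paper uses the exterior $\nu_t$-mass matrix $Q$. You also use Markov's inequality plus min--max where the paper uses the variational principle for eigenvalue sums plus Jensen.
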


\begin{proof}
Put $R=\sqrt t$, $V_R=V_p(R)$, and let $a=a(m)$ be as in \eqref{eq:tail}. The sections $u_a=\sqrt N\,s_a$ are orthonormal in $L^2(\nu_t)$. Their exterior mass matrix is
\[
 Q_{ab}=\int_{M\setminus B(p,aR)}\ip{u_a}{u_b}_h\,d\nu_t.
\]
It satisfies
\begin{equation}\label{eq:outside-matrix}
 0\leq Q\leq I_N,\qquad
 \tr Q=N\nu_t(M\setminus B(p,aR))\leq\frac N8.
\end{equation}
At most $N/4$ eigenvalues of $Q$ can exceed $1/2$. Thus there is a coefficient subspace $W\subset\C^N$ of dimension $\ell\geq N/2$ on which $Q\leq I/2$. In particular,
\begin{equation}\label{eq:interior-mass}
 \int_{B(p,aR)}\left|\sum_{a=1}^N c_au_a\right|_h^2d\nu_t
 \geq\frac12|c|^2,\qquad c\in W.
\end{equation}
Choose an orthonormal basis of $W$ and denote the corresponding combinations of the $u_a$ by $v_1,\ldots,v_\ell$.

Take a Lipschitz cutoff $\chi$ with
\[
 0\leq\chi\leq1,\quad \chi=1\text{ on }B(p,aR),\quad
 \supp\chi\subset\overline{B(p,(a+1)R)},\quad
 |d\chi|\leq C/R.
\]
Choose a smooth relatively compact domain $\Omega$ containing $\overline{B(p,(a+1)R)}$ and contained in $B(p,(a+2)R)$. Such a domain exists by completeness and a smooth exhaustion. The sections $\chi v_j$ belong to the Dirichlet form domain on $\Omega$.

Let $G$ and $H$ be their unweighted mass and energy matrices:
\[
 G_{ij}=\int_\Omega\ip{\chi v_i}{\chi v_j}_h\,dV_g,
 \qquad
 H_{ij}=\int_\Omega\ip{D(\chi v_i)}{D(\chi v_j)}_h\,dV_g.
\]
The upper density estimate in \eqref{eq:local-density}, together with \eqref{eq:interior-mass}, gives
\begin{equation}\label{eq:gram-lower}
 G\geq c(m)V_R I_\ell.
\end{equation}
Since the coefficient vectors defining the $v_j$ are orthonormal,
\[
 \sum_{j=1}^\ell|v_j|_h^2\leq\sum_{a=1}^N|u_a|_h^2=N,
 \qquad
 \sum_{j=1}^\ell|Dv_j|_h^2\leq
 N\sum_{a=1}^N|Ds_a|_h^2.
\]
Using the lower density estimate on the support of $\chi$, we obtain
\begin{align}
 \tr H
 &\leq C(m)V_R\int_M\sum_{j=1}^\ell|D(\chi v_j)|_h^2\,d\nu_t\notag\\
 &\leq C(m)V_RN(E+t^{-1}).\label{eq:energy-trace}
\end{align}
Only a gradient bound for the cutoff is needed.

Let $U:\C^\ell\to L^2(\Omega,L;dV_g)$ be the map whose columns are $\chi v_1,\ldots,\chi v_\ell$. The columns of $UG^{-1/2}$ form an unweighted $L^2$-orthonormal family. Its total energy is
\begin{equation}\label{eq:orthonormal-energy}
 \tr(G^{-1}H)\leq C(m)N(E+t^{-1}).
\end{equation}
Let $0\leq\lambda_1\leq\lambda_2\leq\cdots$ be the eigenvalues of $\mathcal L_{D,\Omega}$. The variational principle for sums of eigenvalues gives
\begin{equation}\label{eq:eigen-sum}
 \sum_{j=1}^\ell\lambda_j\leq C(m)N(E+t^{-1}).
\end{equation}
One may verify this form of the variational principle by expanding the orthonormal family in an eigenbasis: the resulting weights lie in $[0,1]$ and sum to $\ell$, so their weighted eigenvalue sum is at least the sum of the first $\ell$ eigenvalues.

For every $\sigma>0$, convexity of the exponential gives
\begin{align}
 \Tr e^{-\sigma\mathcal L_{D,\Omega}}
 &\geq\sum_{j=1}^\ell e^{-\sigma\lambda_j}
 \geq\ell\exp\left(-\frac\sigma\ell\sum_{j=1}^\ell\lambda_j\right)\notag\\
 &\geq\frac N2\exp\bigl(-C(m)\sigma(E+t^{-1})\bigr).
 \label{eq:trace-lower}
\end{align}
For $0<\sigma\leq t$, Proposition~\ref{prop:local-trace} gives the reverse bound
\begin{equation}\label{eq:trace-upper-count}
 \Tr e^{-\sigma\mathcal L_{D,\Omega}}
 \leq C(m)(t/\sigma)^{m/2}.
\end{equation}
Choose $\sigma=t/(1+tE)$. Then $\sigma(E+t^{-1})=1$, and \eqref{eq:trace-lower}--\eqref{eq:trace-upper-count} prove \eqref{eq:balanced-count}.
\end{proof}

\begin{remark}
The proof uses a trace comparison, not a termwise comparison between the scalar and connection eigenvalues. The rank-one hypothesis is also essential to the displayed estimate: a rank-$q$ bundle contributes a factor $q$ to the heat-trace upper bound. Here $N$ counts sections of one line bundle, not summands of a rank-$N$ bundle.
\end{remark}

\section{A Balanced Logarithmic Functional}

Throughout this section, $M^n$ is a complete connected noncompact K\"ahler manifold with $\Ric_g\geq0$. Fix a finite-dimensional subspace $V\subset\OO_d(M)$ which contains $1$, and write $N=\dim_{\C}V$. Choose a basis
\begin{equation}\label{eq:F-basis}
 F=(f_1,\ldots,f_N)^t,\qquad f_1=1.
\end{equation}
In particular, $|F(x)|\geq1$ for all $x$. Define
\begin{equation}\label{eq:positive-space}
 \Pn=\{A\in\operatorname{Herm}(N):A>0,\ \det A=1\},
 \qquad \phi_A=\log(F^*AF),
\end{equation}
and
\begin{equation}\label{eq:functional}
 \Phi_F(A,t)=P_t\phi_A(p),\qquad
 \beta_F(t)=\inf_{A\in\Pn}\Phi_F(A,t).
\end{equation}
For every fixed $A$, these quantities are finite. Indeed, $F^*AF\geq\lambda_{\min}(A)$, while $\phi_A$ has at most logarithmic growth above.

\subsection{Existence of the balancing form}

We record a measure-theoretic balancing statement. Unlike an argument based on integrability of individual logarithms of linear forms, its proof only uses that projective hyperplanes have measure zero.

\begin{lemma}\label{lem:balance}
Let $(X,\nu)$ be a probability space, and let $\xi:X\to\C^N$ be measurable with $|\xi|=1$ almost everywhere. Suppose that
\begin{equation}\label{eq:hyperplane-null}
 \nu\{x:v^*\xi(x)=0\}=0\qquad(0\neq v\in\C^N).
\end{equation}
Then
\[
 J_\nu(A)=\int_X\log(\xi^*A\xi)\,d\nu,\qquad A\in\Pn,
\]
has a unique minimizer $A_\nu$. It satisfies
\begin{equation}\label{eq:balance-general}
 \int_X\frac{A_\nu^{1/2}\xi\xi^*A_\nu^{1/2}}
 {\xi^*A_\nu\xi}\,d\nu=\frac1N I_N.
\end{equation}
If $\nu=\nu_t$ varies continuously in total variation, $\xi$ is fixed, and \eqref{eq:hyperplane-null} holds at every $t$, then $A_{\nu_t}$ is continuous in $t$ and remains in a compact subset of $\Pn$ on every compact time interval.
\end{lemma}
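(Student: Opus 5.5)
We obtain the minimizer by geodesic convexity plus properness on $\Pn$, read off \eqref{eq:balance-general} as the first-order condition, and get continuity in $t$ from uniform properness together with uniqueness.

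\textbf{Finiteness and convexity.} For $A\in\Pn$ we have $\lambda_{\min}(A)\leq\xi^*A\xi\leq\lambda_{\max}(A)$ almost everywhere, so $J_\nu(A)$ is finite. Write $A=e^{B}$ with $B$ Hermitian and $\tr B=0$; this parametrizes $\Pn$. Along a geodesic $A_s=A^{1/2}e^{sX}A^{1/2}$ with $\tr X=0$, put $\eta=A^{1/2}\xi$. Then $s\mapsto\log(\eta^*e^{sX}\eta)$ is convex, since it is the logarithm of a sum of exponentials $\sum_k|\eta_k|^2e^{s\mu_k}$ in an eigenbasis of $X$.

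\textbf{Strict convexity.} Convexity is strict unless $\eta$ lies in a single eigenspace of $X$. When $X\neq0$ and $\tr X=0$, every eigenspace of $X$ is proper, so each lies in some hyperplane $\{v^*\eta=0\}$. By \eqref{eq:hyperplane-null}, applied to $A^{1/2}v$, that set is null. Hence $J_\nu$ is strictly geodesically convex, which gives uniqueness.

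\textbf{Properness.} Take $B=\sum\mu_kP_k$ with $\tr B=0$ and $|B|=1$, and let $\mu_{\max}>0$ be its largest eigenvalue. We have
\[
J_\nu(e^{sB})\geq s\mu_{\max}+\int\log|P_{\max}\xi|^2\,d\nu .
\]
That integral may be $-\infty$, so we argue differently. Since $\log(\xi^*e^{sB}\xi)\geq s\mu_{\max}+\log|P_{\max}\xi|^2$ and also $\geq s\mu_{\min}$, truncating gives
\[
J_\nu(e^{sB})\geq s\mu_{\max}\,\nu\{|P_{\max}\xi|^2\geq\delta\}+(s\mu_{\min}+\log\delta)\,\nu\{|P_{\max}\xi|^2\geq\delta\}+s\mu_{\min}\,\nu\{|P_{\max}\xi|^2<\delta\}.
\]
A cleaner route is to bound $J_\nu(e^{sB})\geq s\int\max(\mu_{\max}+s^{-1}\log|P_{\max}\xi|^2,\mu_{\min})\,d\nu$. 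As $s\to\infty$, the integrand tends to $\mu_{\max}$ wherever $P_{\max}\xi\neq0$, which is almost everywhere by \eqref{eq:hyperplane-null}. So the slope at infinity is $\mu_{\max}>0$.

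A compactness argument over the unit sphere of trace-free $B$ then gives $J_\nu(e^{B})\geq c|B|-C$. This uses the monotone convergence of the slopes $s^{-1}(J(e^{sB})-J(I))$, which are nondecreasing by convexity, together with lower semicontinuity in $B$ via Fatou. So $J_\nu$ is proper and a minimizer $A_\nu$ exists.

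\textbf{Balancing condition.} We differentiate $J_\nu(A_\nu^{1/2}e^{sX}A_\nu^{1/2})$ at $s=0$; this is justified because the integrand's derivative is bounded by $|X|$. The result is
\[
\int\frac{\eta^*X\eta}{|\eta|^2}\,d\nu=0\quad\text{for all trace-free } X .
\]
Hence $\int\eta\eta^*/|\eta|^2\,d\nu$ is a multiple of $I$. Taking the trace shows the multiple is $1/N$, which is \eqref{eq:balance-general}.

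\textbf{Continuity in $t$.} For fixed $A$, $|J_{\nu_t}(A)-J_{\nu_s}(A)|\leq\|\log\lambda\|_\infty\,\|\nu_t-\nu_s\|_{TV}$ on bounded sets of $A$. So $t\mapsto\min J_{\nu_t}$ is bounded above on compact time intervals $I$.

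This step is the main obstacle: we need properness uniform in $t\in I$, i.e. $\inf_{t\in I,\,|B|=1}\mathrm{slope}_t(B)>0$. We would show that the slope function $(t,B)\mapsto\lim_s s^{-1}J_{\nu_t}(e^{sB})$ is lower semicontinuous. The truncated functions $\int\max(\mu_{\max}+s^{-1}\log|P_{\max}\xi|^2,\mu_{\min})\,d\nu_t$ are continuous in $t$ for fixed $s$, but $P_{\max}$ jumps with $B$. Instead, we would use the convex lower bound at a finite $s_0$: by Fatou plus total variation continuity, $s_0^{-1}(J_{\nu_t}(e^{s_0B})-J_{\nu_t}(I))$ is lower semicontinuous in $(t,B)$. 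Its pointwise limit as $s_0\to\infty$ is positive, and the family is monotone in $s_0$. A Dini-type argument on the compact set $I\times S$ then gives a uniform $s_0$ with slope at least $c>0$, where $S$ is the unit sphere of trace-free Hermitian matrices.

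Then all minimizers $A_{\nu_t}$, $t\in I$, lie in a fixed compact subset $K\subset\Pn$. Any limit point $A_*$ of $A_{\nu_{t_k}}$ minimizes $J_{\nu_t}$, by uniform convergence $J_{\nu_{t_k}}\to J_{\nu_t}$ on $K$. Uniqueness forces $A_*=A_{\nu_t}$, which gives continuity.
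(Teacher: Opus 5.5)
Your proof is correct. It obtains coercivity by a different mechanism from the paper's; the remaining steps are the same as the paper's.

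\textbf{The paper's route.} The paper first proves a uniform nonconcentration estimate. For every $\eta>0$ there is $\varepsilon>0$ such that $\nu\{|v^*\xi|^2<\varepsilon\}<\eta$ for all unit $v\in\C^N$. This comes from a contradiction argument using compactness of the unit sphere in $\C^N$ and continuity from above of $\nu$. Taking $v$ to be a top eigenvector of $A$, with $e^L=\lambda_{\max}(A)$, this gives directly the explicit bound $J_\nu(A)\geq(1-N\eta)L+(1-\eta)\log\varepsilon$, without any use of convexity. Uniformity on a compact time interval comes from rerunning the same contradiction argument along $t_j\to t$ with total-variation convergence.

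\textbf{Your abandoned truncation.} The truncation you set aside is exactly the paper's argument. The missing piece was that $\delta$ can be chosen uniformly in the direction. Your displayed truncation inequality also carries a spurious extra term $s\mu_{\min}\,\nu\{|P_{\max}\xi|^2\geq\delta\}$. This is harmless, since $\mu_{\min}<0$ and the term only weakens the bound.

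\textbf{Your route.} You use convexity along the rays $s\mapsto e^{sB}$ through $I$, which makes the difference quotients monotone. The asymptotic slope is exactly $\mu_{\max}(B)>0$, which needs \eqref{eq:hyperplane-null} only for the top eigenspace. A Dini-type compactness argument on the unit sphere of trace-free Hermitian matrices, taken times the time interval, then gives a uniform $s_0$. The semicontinuity you need in $(t,B)$ holds, and in fact continuity holds, because for fixed $s_0$ the integrand is bounded by $s_0$.

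\textbf{Comparison.} Your argument is the standard Kempf--Ness-style properness criterion (positive slopes at infinity imply properness). It reuses the geodesic convexity you need anyway for uniqueness. The paper's version is quantitative, with coercivity constants explicit in terms of $\varepsilon$. It also makes existence independent of convexity.
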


\begin{proof}
The case $N=1$ is immediate. Suppose $N\geq2$. We first claim that for every $\eta>0$ there is $\varepsilon>0$ such that
\begin{equation}\label{eq:uniform-nonconcentration}
 \nu\{|v^*\xi|^2<\varepsilon\}<\eta
 \qquad\text{for every }v\in\C^N\text{ with }|v|=1.
\end{equation}
Otherwise, choose $v_j\to v$ and $\varepsilon_j\downarrow0$ for which the left-hand side is at least $\eta$. Since
\[
 \{|v_j^*\xi|^2<\varepsilon_j\}
 \subset\{|v^*\xi|<\sqrt{\varepsilon_j}+|v_j-v|\},
\]
continuity from above of the measure would give $\nu\{v^*\xi=0\}\geq\eta$, a contradiction.

Let $e^L$ be the largest eigenvalue of $A\in\Pn$. Then $L\geq0$ and the smallest eigenvalue is at least $e^{-(N-1)L}$. Choose a unit eigenvector $v$ corresponding to $e^L$. On the complement of the set in \eqref{eq:uniform-nonconcentration},
\[
 \log(\xi^*A\xi)\geq L+\log\varepsilon;
\]
everywhere it is at least $-(N-1)L$. For $L$ sufficiently large, these bounds give
\begin{equation}\label{eq:coercivity}
 J_\nu(A)\geq(1-N\eta)L+(1-\eta)\log\varepsilon.
\end{equation}
Take $\eta=1/(2N)$. It follows that $J_\nu$ is coercive on $\Pn$. A bound for the largest eigenvalue, together with $\det A=1$, also bounds the inverse. Thus its sublevel sets are compact. The integrand is bounded uniformly in $x$ when $A$ ranges in a compact subset of $\Pn$, so $J_\nu$ is continuous and attains its minimum.

To prove uniqueness, consider a nonconstant geodesic
\[
 A(s)=A_0^{1/2}e^{sB}A_0^{1/2},\qquad B=B^*,\quad\tr B=0.
\]
After diagonalizing $B$, the second derivative of the integrand has the form
\begin{equation}\label{eq:variance}
 \frac{d^2}{ds^2}\log\left(\sum_je^{s\lambda_j}|w_j|^2\right)
 =\sum_j\theta_j\lambda_j^2-
 \left(\sum_j\theta_j\lambda_j\right)^2,\qquad
 \theta_j=\frac{e^{s\lambda_j}|w_j|^2}
 {\sum_ke^{s\lambda_k}|w_k|^2}.
\end{equation}
It is nonnegative. It vanishes only when $w$ belongs to one eigenspace of $B$. Since $B$ is nonscalar, those eigenspaces are proper complex subspaces. Their inverse images under $A_0^{1/2}$ are also proper, and each is contained in a hyperplane. By \eqref{eq:hyperplane-null}, their union has measure zero. Hence $J_\nu$ is strictly convex along every nonconstant geodesic. This proves uniqueness.

At the minimum, differentiate along $A_\nu^{1/2}e^{sB}A_\nu^{1/2}$ for every traceless Hermitian $B$. The result is that the matrix on the left of \eqref{eq:balance-general} is orthogonal to all such $B$. Its trace is one, so it equals $I_N/N$.

Finally, on a compact time interval, \eqref{eq:uniform-nonconcentration} holds with a common $\varepsilon$. If not, the same argument with a subsequence $t_j\to t$ and total-variation convergence would contradict \eqref{eq:hyperplane-null} at $t$. Thus \eqref{eq:coercivity} is uniform there. Since $J_{\nu_t}(I)=0$, all minimizers lie in one compact subset of $\Pn$. Joint continuity and uniqueness show that every convergent subsequence of minimizers tends to the minimizer at the limiting time. This proves continuity.
\end{proof}

The balancing construction is classical in projective geometry; compare \cite{BLY94,Donaldson09}. The direct proof above records the particular nonconcentration and parameter properties that we need. None of its coercivity constants is claimed to be uniform over all choices of $V$.

Apply Lemma~\ref{lem:balance} to
\[
 \xi(x)=F(x)/|F(x)|,\qquad d\nu=d\nu_t.
\]
For every $v\neq0$, the function $v^*F$ is a nonzero holomorphic function, so its zero set has Riemannian volume zero. The heat measure has a smooth positive density. Thus \eqref{eq:hyperplane-null} holds. The difference between $\Phi_F(A,t)$ and $J_{\nu_t}(A)$ is the quantity $P_t\log|F|^2(p)$, independent of $A$. Consequently, the infimum in \eqref{eq:functional} is attained at a unique matrix $A_t$, continuous for $t>0$.

Set
\begin{equation}\label{eq:balanced-data}
 g^{(t)}=A_t^{1/2}F,\qquad
 S_t=|g^{(t)}|^2=F^*A_tF,\qquad
 \phi_t=\log S_t.
\end{equation}
Then
\begin{equation}\label{eq:balance-heat}
 \int_M\frac{g^{(t)}(g^{(t)})^*}{S_t}\,d\nu_t
 =\frac1N I_N.
\end{equation}

\subsection{The envelope differentiation formula}

\begin{proposition}\label{prop:envelope}
The function $\beta_F$ is continuously differentiable on $(0,\infty)$, and
\begin{equation}\label{eq:envelope}
 \beta_F'(t)=\int_M\lap\phi_t\,d\nu_t\geq0.
\end{equation}
No derivative of the map $t\mapsto A_t$ is required.
\end{proposition}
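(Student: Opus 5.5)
We argue by the envelope principle, using only the continuity of $t\mapsto A_t$ from Lemma~\ref{lem:balance} and the heat-derivative formula of Lemma~\ref{lem:heat-derivative}.

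\emph{Regularity of $\Phi_F$.} For fixed $A\in\Pn$ the function $\phi_A=\log(F^*AF)$ is smooth, since $F^*AF\geq\lambda_{\min}(A)>0$. Its derivatives have polynomial growth. Indeed, $|d\phi_A|\leq |dS|/S$ and $\lap\phi_A=\lap S/S-|dS|^2/S^2$, where $S=F^*AF$ is bounded below. Because $V\subset\OO_d(M)$ and $\Ric_g\geq0$, Cheng--Yau gradient estimates on balls $B(x,r(x)+1)$ show that $|dF|$ and $|\nabla dF|$ have polynomial growth. These bounds hold uniformly for $A$ in compact subsets of $\Pn$. Lemma~\ref{lem:heat-derivative} then shows that $\Phi_F(A,t)$ is $C^1$ in $t$, with
\[
\partial_t\Phi_F(A,t)=P_t(\lap\phi_A)(p),
\]
and that this derivative is jointly continuous in $(A,t)$.

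\emph{Envelope argument.} Fix $t_0>0$ and a compact interval $I\ni t_0$. By Lemma~\ref{lem:balance}, $A_t$ stays in a compact set $K\subset\Pn$ for $t\in I$ and depends continuously on $t$. For $h$ small,
\[
\beta_F(t_0+h)-\beta_F(t_0)\leq\Phi_F(A_{t_0},t_0+h)-\Phi_F(A_{t_0},t_0),
\]
and also
\[
\beta_F(t_0+h)-\beta_F(t_0)\geq\Phi_F(A_{t_0+h},t_0+h)-\Phi_F(A_{t_0+h},t_0).
\]
The mean value theorem writes each right-hand side as $h\,\partial_t\Phi_F(A,\tau)$ for some $\tau$ between $t_0$ and $t_0+h$. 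Dividing by $h$ (and treating the two signs of $h$ separately) and using joint continuity of $\partial_t\Phi_F$ on $K\times I$ together with $A_{t_0+h}\to A_{t_0}$, both difference quotients converge to $P_{t_0}(\lap\phi_{t_0})(p)=\int_M\lap\phi_{t_0}\,d\nu_{t_0}$. The same joint continuity, composed with $t\mapsto A_t$, shows that the derivative is continuous in $t$. No derivative of $A_t$ enters.

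\emph{Sign.} With $g=A_t^{1/2}F$ holomorphic and $S_t=|g|^2>0$,
\[
\partial\bar\partial\log S_t=\frac{|S_t\,\partial g-(g^*\partial g)\,g|^2}{S_t^{\,2}}\geq0
\]
in the Lagrange-identity sense. So $\phi_t$ is plurisubharmonic, hence $\lapc\phi_t\geq0$ and $\lap\phi_t=2\lapc\phi_t\geq0$. The integral is therefore nonnegative.

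\emph{Main obstacle.} The delicate step is the uniform polynomial-growth hypothesis \eqref{eq:polynomial-test} for $\phi_A$, uniformly in $A\in K$, which Lemma~\ref{lem:heat-derivative} requires. This needs the second-derivative bounds on $F$ from interior elliptic estimates for holomorphic (hence harmonic) functions on unit balls with $\Ric\geq0$. Since the injectivity radius is not controlled, the cleanest route is to apply Cheng--Yau to $f$ and then to $|\partial f|^2$, using the Bochner formula $\lapc|\partial f|^2=|\nabla\partial f|^2+\Ric(\partial f,\overline{\partial f})\geq|\nabla\partial f|^2$ and a mean-value inequality for subharmonic functions. This yields polynomial bounds on $|\nabla\partial f|$.
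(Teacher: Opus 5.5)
The envelope argument and the sign argument are the same as the paper's, and they are fine. The mean value theorem, joint continuity of $\partial_t\Phi_F$ on $K\times I$, and $A_{t_0+h}\to A_{t_0}$ are exactly what is needed, and no derivative of $A_t$ enters.

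\textbf{The gap: the second-derivative bound.} The problem is in the regularity step, which you yourself flag as the main obstacle. You assert polynomial growth of $|\nabla dF|$ and propose to get it from Cheng--Yau applied to $|\partial f|^2$, the Bochner formula, and a mean-value inequality. That does not work.
\begin{itemize}
\item Cheng--Yau is an estimate for harmonic functions, and $|\partial f|^2$ is only subharmonic.
\item The Bochner inequality $|\nabla\partial f|^2\leq\lapc|\partial f|^2$, integrated against a cutoff, gives only $L^2$ control of $\nabla\partial f$ on balls.
\item The mean-value inequality gives pointwise control of $|\partial f|^2$, not of $|\nabla\partial f|$.
\end{itemize}
Pointwise Hessian bounds for harmonic functions would need control of the local geometry (curvature, harmonic radius), and a lower Ricci bound alone does not provide this at infinity. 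So the bound your regularity step relies on is not established by your sketch.

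\textbf{The fix: only first derivatives are needed.} Lemma~\ref{lem:heat-derivative} asks only for polynomial bounds on $|u|+|du|+|\lap u|$, and for $\phi_A$ the Laplacian involves only first derivatives of $F$. Write $S=F^*AF=\sum_a|g_a|^2$ with $g=A^{1/2}F$ holomorphic. The real and imaginary parts of each $g_a$ are harmonic, so $\lap|g_a|^2=2|dg_a|^2$. Hence in your formula $\lap\phi_A=\lap S/S-|dS|^2/S^2$ both terms are bounded by $C_K|dF|^2$ for $A\in K$.

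Equivalently, you can use the log-Hessian formula you already write in the sign step, which contains only $\partial g$. This is precisely how the paper argues: it uses \eqref{eq:log-hessian} and remarks that only first derivatives of the holomorphic functions occur. With this, the Cheng--Yau gradient bound for $F$ suffices, and the rest of your proof goes through unchanged.

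A minor point: in your Lagrange identity the denominator should be $S_t^{3}$, not $S_t^{2}$, since $|Sw-(g^*w)g|^2=S^2|w|^2-S|g^*w|^2$. The nonnegativity conclusion is unaffected.
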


\begin{proof}
We first check the hypotheses for differentiating at a fixed matrix. The real and imaginary parts of each $f_a$ are harmonic. The Cheng--Yau local gradient estimate \cite{ChengYau75}, applied on unit balls, gives
\[
 |df_a|(x)\leq C_{a,d}(1+r(x))^d.
\]
If $A$ belongs to a compact subset $K\subset\Pn$, then
\begin{equation}\label{eq:matrix-local-bounds}
 c_K|F|^2\leq F^*AF\leq C_K|F|^2,
 \qquad F^*AF\geq c_K>0.
\end{equation}
Writing $g=A^{1/2}F$ and $S=|g|^2$, holomorphicity gives
\begin{equation}\label{eq:log-hessian}
 (\log S)_{i\bar j}
 =\frac{\sum_a(Z_i g_a)\overline{Z_jg_a}}{S}
 -\frac{\bigl(\sum_a\overline{g_a}Z_i g_a\bigr)
 \bigl(\sum_b g_b\overline{Z_jg_b}\bigr)}{S^2}.
\end{equation}
This Hermitian matrix is nonnegative. Equations~\eqref{eq:matrix-local-bounds}--\eqref{eq:log-hessian} and the gradient bounds imply
\begin{equation}\label{eq:phi-growth}
 |\phi_A|+|d\phi_A|+|\lap\phi_A|
 \leq C_{K,V,d}(1+r)^{q_{V,d}}
\end{equation}
for some finite exponent, uniformly over $A\in K$. Only first derivatives of the holomorphic functions occur in \eqref{eq:log-hessian}.

Lemma~\ref{lem:heat-derivative} now gives
\begin{equation}\label{eq:partial-Phi}
 \partial_t\Phi_F(A,t)=P_t(\lap\phi_A)(p),
\end{equation}
and this derivative is jointly continuous on compact matrix sets and compact positive time intervals. In particular, no global curvature upper bound is needed for this step.

Fix $t>0$. For $h>0$ sufficiently small, minimality at $t$ and $t+h$ gives
\begin{align*}
 \Phi_F(A_{t+h},t+h)-\Phi_F(A_{t+h},t)
 &\leq\beta_F(t+h)-\beta_F(t)\\
 &\leq\Phi_F(A_t,t+h)-\Phi_F(A_t,t).
\end{align*}
Divide by $h$ and use $A_{t+h}\to A_t$ and the joint continuity in \eqref{eq:partial-Phi}. Both bounds tend to $\partial_t\Phi_F(A_t,t)$. The argument for $h<0$ is identical after reversing the divided inequalities. This proves \eqref{eq:envelope}, and continuity of the derivative follows from the same facts. Nonnegativity follows from \eqref{eq:log-hessian}.
\end{proof}

\begin{remark}\label{rem:basis}
The derivative in \eqref{eq:envelope} is intrinsic to $V$ and $p$. If $\widetilde F=BF$ for $B\in\GL(N,\C)$, then the change of variable
\[
 A\longmapsto |\det B|^{-2/N}B^*AB
\]
in \eqref{eq:functional} shows that
\begin{equation}\label{eq:basis-change}
 \beta_{\widetilde F}(t)=\beta_F(t)+\frac2N\log|\det B|.
\end{equation}
Thus $\beta_F'(t)$ and $\beta_F(t)-\beta_F(1)$ do not depend on the chosen basis. The condition numbers of $A_t$ may nevertheless diverge as $t\to0$ or $t\to\infty$.
\end{remark}

\section{Proof of the Main Theorem}

\subsection{The Chern-connection energy identity}

We isolate the use of holomorphicity. The formula is local, but the resulting line bundle and sections will be used globally.

\begin{lemma}\label{lem:chern}
Let $g_1,\ldots,g_N$ be holomorphic functions on a K\"ahler manifold, with no common zero. Put
\[
 S=\sum_{a=1}^N|g_a|^2,\qquad \phi=\log S,
 \qquad h=S^{-1}
\]
on the trivial holomorphic line bundle $L=M\times\C$. Regard each $g_a$ as the section $g_ae$ in the global holomorphic frame $e$ with $|e|_h^2=h$. Then the Chern connection satisfies
\begin{equation}\label{eq:chern-energy}
 \sum_{a=1}^N|g_a|_h^2=1,\qquad
 \sum_{a=1}^N|D^h g_a|_h^2
 =\lapc\phi=\frac12\lap\phi.
\end{equation}
\end{lemma}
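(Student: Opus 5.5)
The plan is a direct local computation in the holomorphic frame $e$. The first identity is immediate: $|g_ae|_h^2=|g_a|^2h=|g_a|^2/S$, and these sum to $1$. For the second identity, recall that the Chern connection of $(L,h)$ in a holomorphic frame is $D^he=\partial\log h\otimes e=-\partial\phi\otimes e$. Hence, for a holomorphic function $g$,
\[
 D^h(ge)=(dg-g\,\partial\phi)\otimes e=(\partial g-g\,\partial\phi)\otimes e,
\]
using $\bar\partial g=0$. So $D^h(g_ae)$ is a pure $(1,0)$-form with values in $L$.

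Next I will compute the full real norm of such a form. For a $(1,0)$-form $\alpha=\sum_i\alpha_i\theta^i$ in a unitary coframe, the real norm satisfies $|\alpha|^2=\sum_i|\alpha(Z_i)|^2$ with the normalization of \eqref{eq:lap-convention}. This follows because a real-valued $u$ has $|du|^2=2|\partial u|^2$ and $du=\partial u+\overline{\partial u}$, so a $(1,0)$-form carries half of that. I will fix this constant carefully against the convention $|du|^2=2|\partial u|^2$, since it is the only place a factor of $2$ can go wrong. This gives
\[
 \sum_a|D^hg_a|_h^2=\frac1S\sum_a\sum_i\left|Z_ig_a-g_a\frac{\sum_b\overline{g_b}Z_ig_b}{S}\right|^2.
\]

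Then I will expand the square. Writing $w_i=\sum_b\overline{g_b}Z_ig_b$, the expansion gives
\[
 \sum_i\left(\frac{\sum_a|Z_ig_a|^2}{S}-\frac{2|w_i|^2}{S^2}+\frac{|w_i|^2 S}{S^2\,S}\right)
 =\sum_i\left(\frac{\sum_a|Z_ig_a|^2}{S}-\frac{|w_i|^2}{S^2}\right).
\]
By \eqref{eq:log-hessian}, this is exactly $\sum_i\phi_{i\bar i}=\lapc\phi$, and this step is where holomorphicity is used. Since $\phi$ is real, $\lapc\phi=\frac12\lap\phi$ by \eqref{eq:lap-convention}. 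The absence of common zeros guarantees that $S>0$, so $h$ and $\phi$ are smooth. The main obstacle I expect is the normalization of norms of $(1,0)$-forms versus real covectors; the algebra itself is routine.
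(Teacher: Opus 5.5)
Your proposal is correct and follows the paper's proof: the same Chern-connection formula $D^h(g_ae)=(\partial g_a-g_a\partial\phi)e$ in the holomorphic frame, the same observation that a section with vanishing $(0,1)$ derivative has full real norm $\sum_i|D_{Z_i}(\cdot)|_h^2$ in a unitary frame, and the same identification with $\sum_i\phi_{i\bar i}$ via \eqref{eq:log-hessian}. The only difference is that you write out the expansion of the square and the normalization check $|\alpha|^2=\sum_i|\alpha(Z_i)|^2$, which the paper leaves implicit; the check is correct with $Z_i=(E_i-iJE_i)/\sqrt2$.
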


\begin{proof}
The Chern connection in the frame $e$ is
\begin{equation}\label{eq:chern-connection}
 D^h(g_ae)=(\partial g_a-g_a\partial\phi)e,
 \qquad (D^h)^{0,1}(g_ae)=0.
\end{equation}
These are the usual Chern-connection conventions; see \cite[Chapter~V]{Demailly12}. At a point, choose a unitary frame $Z_1,\ldots,Z_n$. Since the $(0,1)$ covariant derivatives vanish, the full real derivative norm equals the sum of the $(1,0)$ derivative norms. Therefore
\begin{align}
 \sum_a|D^h g_a|_h^2
 &=\sum_i\left(
 \frac{\sum_a|Z_i g_a|^2}{S}
 -\frac{|\sum_a\overline{g_a}Z_i g_a|^2}{S^2}
 \right)\notag\\
 &=\sum_i\phi_{i\bar i}
 =\lapc\phi.\label{eq:chern-calculation}
\end{align}
The last equality is \eqref{eq:log-hessian}. The remaining statements follow from the definitions and \eqref{eq:lap-convention}.
\end{proof}

\begin{remark}
Replacing $D^h$ by the ordinary differential is not harmless. Indeed, if $u_a=g_a/\sqrt S$, a direct calculation with the same real norms gives
\begin{equation}\label{eq:ordinary-energy}
 \sum_a|du_a|^2
 =\frac12\lap\phi+\frac14|d\phi|^2.
\end{equation}
The Chern connection removes the last term. The energy used in Theorem~\ref{thm:balanced-count} is the connection energy in \eqref{eq:chern-energy}, not the left-hand side of \eqref{eq:ordinary-energy}.
\end{remark}

\begin{proposition}\label{prop:slope}
For $V$, $F$, and $\beta_F$ as in Section~4, there is a constant $C(n)$, independent of $V$, such that
\begin{equation}\label{eq:slope}
 \dim_{\C}V\leq C(n)\bigl(1+t\beta_F'(t)\bigr)^n
 \qquad\text{for every }t>0.
\end{equation}
\end{proposition}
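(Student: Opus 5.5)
The plan is to apply Theorem~\ref{thm:balanced-count} with real dimension $m=2n$ to the balanced data of Section~4, using Lemma~\ref{lem:chern} to identify the connection energy with $\beta_F'(t)$. Fix $t>0$. Let $A_t$ be the unique minimizer from Lemma~\ref{lem:balance}, and put $g=g^{(t)}=A_t^{1/2}F$, $S=S_t$, $\phi=\phi_t$ as in \eqref{eq:balanced-data}. Since $f_1=1$ and $A_t>0$, we have $S\geq\lambda_{\min}(A_t)>0$. Hence the $g_a$ have no common zero, and Lemma~\ref{lem:chern} applies. It produces the trivial line bundle with metric $h=S^{-1}$, its Chern connection $D=D^h$, and sections $s_a=g_ae$. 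These satisfy $\sum_a|s_a|_h^2=1$ and $\sum_a|Ds_a|_h^2=\frac12\lap\phi$.

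Next I verify the hypotheses \eqref{eq:balanced-sections} and \eqref{eq:energy-def}. The pointwise normalization was just established. For the orthogonality, note that
\[
\ip{s_a}{s_b}_h=\overline{g_a}g_bS^{-1},
\]
which is the $(b,a)$ entry of $gg^*/S$. Thus the balance identity \eqref{eq:balance-heat} gives exactly
\[
\int_M\ip{s_a}{s_b}_h\,d\nu_t=\frac1N\delta_{ab}.
\]
For the energy, I use Proposition~\ref{prop:envelope}:
\[
E=\int_M\frac12\lap\phi_t\,d\nu_t=\frac12\beta_F'(t).
\]
This is finite, because the polynomial bounds \eqref{eq:phi-growth} and the Gaussian moment bound \eqref{eq:moment} make the integral converge. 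It is also nonnegative.

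Theorem~\ref{thm:balanced-count} with $m=2n$ then yields
\[
N\leq C(2n)\bigl(1+\tfrac12 t\beta_F'(t)\bigr)^{n}\leq C(n)\bigl(1+t\beta_F'(t)\bigr)^n.
\]
The constant depends only on $n$ and not on $V$, since Theorem~\ref{thm:balanced-count} allows the bundle, metric and connection to be chosen freely at each $t$ with no uniformity.

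I expect the only delicate point to be bookkeeping rather than analysis. The first task is to check that the normalization conventions match: the real covariant norm in Lemma~\ref{lem:chern} must be the same norm $|Ds|_h^2$ used in the quadratic form of Section~2, and the factor $\frac12$ between $\lapc$ and $\lap$ only improves the constant. The second is the index ordering in the balance identity, which is harmless because the matrix is $\frac1NI_N$. Smoothness of the sections and of the metric is automatic, since $S$ is smooth and positive.
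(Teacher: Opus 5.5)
Your proposal is correct and follows the paper's proof essentially step for step. You fix $t$, give the trivial bundle the metric $h_t=S_t^{-1}$ with its Chern connection, take the normalization and $\nu_t$-orthogonality from Lemma~\ref{lem:chern} and \eqref{eq:balance-heat}, identify $E_t=\frac12\beta_F'(t)$ via \eqref{eq:envelope}, and apply Theorem~\ref{thm:balanced-count} with $m=2n$; your extra checks (no common zeros since $f_1=1$, the index ordering in the balance identity, and $\beta_F'\geq0$ to absorb the factor $\frac12$) only make explicit what the paper leaves implicit.
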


\begin{proof}
Fix $t>0$. Give the trivial line bundle the metric $h_t=S_t^{-1}$ from \eqref{eq:balanced-data}, and keep this metric fixed in the application of the connection heat-trace estimate. The sections $g_a^{(t)}$ satisfy
\[
 \sum_a|g_a^{(t)}|_{h_t}^2=1,\qquad
 \int_M\ip{g_a^{(t)}}{g_b^{(t)}}_{h_t}\,d\nu_t
 =\frac1N\delta_{ab},
\]
by \eqref{eq:balance-heat}. Equations~\eqref{eq:envelope} and \eqref{eq:chern-energy} give
\begin{equation}\label{eq:E-beta}
 E_t:=\int_M\sum_a|D^{h_t}g_a^{(t)}|_{h_t}^2\,d\nu_t
 =\frac12\beta_F'(t).
\end{equation}
This energy is finite by \eqref{eq:phi-growth}. Apply Theorem~\ref{thm:balanced-count} in real dimension $m=2n$ and absorb the fixed factor $1/2$ into the dimensional constant. This proves \eqref{eq:slope}.
\end{proof}

\subsection{Growth of the minimized heat average}

\begin{proof}[Proof of Theorem~\ref{thm:main}]
It suffices to bound every finite-dimensional subspace of $\OO_d(M)$ by the same constant. Enlarge such a subspace by the constants and denote the resulting space by $V$. Let $F$ and $N$ be as in \eqref{eq:F-basis}. There is $C_F>0$ such that
\[
 |F(x)|^2\leq C_F(1+r(x))^{2d}.
\]
Taking $A=I_N$ in \eqref{eq:functional} gives
\begin{equation}\label{eq:upper-start}
 \beta_F(t)\leq\log C_F+2d\int_M\log(1+r)\,d\nu_t.
\end{equation}
Since $(1+r)^2\leq2(1+r^2)$, Jensen's inequality and \eqref{eq:moment} imply
\begin{align}
 \int_M\log(1+r)\,d\nu_t
 &\leq\frac12\log\left(2\int_M(1+r^2)\,d\nu_t\right)\notag\\
 &\leq\frac12\log\bigl(2+2C(n)t\bigr).
 \label{eq:log-moment}
\end{align}
Consequently,
\begin{equation}\label{eq:beta-upper}
 \beta_F(t)\leq d\log t+C_{F,d,n},\qquad t\geq1.
\end{equation}
The additive constant in this bound may depend on the basis and its growth constants. Its coefficient of $\log t$ does not.

Let $C_0(n)\geq1$ be the constant in \eqref{eq:slope}. That estimate yields
\[
 t\beta_F'(t)\geq (N/C_0(n))^{1/n}-1.
\]
Integrating from $1$ to $t\geq1$, we find
\begin{equation}\label{eq:beta-lower}
 \beta_F(t)-\beta_F(1)
 \geq\bigl((N/C_0(n))^{1/n}-1\bigr)\log t.
\end{equation}
Compare \eqref{eq:beta-upper} and \eqref{eq:beta-lower}, divide by $\log t$, and let $t\to\infty$. We obtain
\[
 (N/C_0(n))^{1/n}-1\leq d,
\]
which is exactly $N\leq C_0(n)(1+d)^n$.

This estimate holds for every finite-dimensional subspace of $\OO_d(M)$. If the whole space had larger dimension, it would contain a finite-dimensional subspace violating the estimate. Thus \eqref{eq:main} follows without a prior finite-dimensionality assumption. Equation~\eqref{eq:main-large} follows from $1+d\leq2d$ when $d\geq1$.
\end{proof}

\begin{remark}
The proof does not require control of $A_t$ as $t\to\infty$. Compactness of the minimizing matrices is used only locally in time, to justify the envelope formula. In contrast, the constant in \eqref{eq:slope} is independent of those matrices, since the connection heat-trace bound is independent of the line-bundle metric and curvature.
\end{remark}

\begin{remark}
If $0\leq d<1$, then $\OO_d(M)=\C$. Indeed, the local gradient estimate applied on balls of radius tending to infinity gives $|df|(x)\leq C_fR^{d-1}$ at each fixed $x$, and hence $df=0$. This is consistent with the normalization in Theorem~\ref{thm:main}.
\end{remark}

\section{Examples and Scope}

\subsection{An integrated kernel estimate for global growth spaces}

For $S>0$, the inner product
\begin{equation}\label{eq:ball-inner}
 \ip{f}{g}_{S}=\int_{B(p,S)}\overline f g\,dV_g
\end{equation}
is positive definite on $\OO_d(M)$ by the identity theorem. Let $f_1,\ldots,f_{N_d}$ be an orthonormal basis for this inner product and define
\begin{equation}\label{eq:global-kernel}
 K_{d,S}(x,x)=\sum_{a=1}^{N_d}|f_a(x)|^2,
 \qquad N_d=\dim_{\C}\OO_d(M).
\end{equation}
This definition is independent of the orthonormal basis.

\begin{corollary}\label{cor:kernel}
Under the assumptions of Theorem~\ref{thm:main}, for every $d\geq0$ and $0<R\leq S$,
\begin{equation}\label{eq:global-trace}
 \int_{B(p,R)}K_{d,S}(x,x)\,dV_g(x)
 \leq C(n)(1+d)^n.
\end{equation}
In particular, for $0<\delta\leq1/2$,
\begin{equation}\label{eq:global-BT}
 \sup_{R>0}\int_{B(p,R)}
 K_{\delta^{-1},(1+\delta)R}(x,x)\,dV_g(x)
 \leq C(n)\delta^{-n}.
\end{equation}
\end{corollary}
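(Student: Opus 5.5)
The estimate is a trace computation with respect to the ball inner product, followed by the dimension bound of Theorem~\ref{thm:main}.

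First, by Theorem~\ref{thm:main}, $N_d=\dim_{\C}\OO_d(M)\leq C(n)(1+d)^n$ is finite, so $K_{d,S}$ is a finite sum of smooth functions and \eqref{eq:global-kernel} makes sense.

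Next, since $R\leq S$, we have $B(p,R)\subset B(p,S)$. The integrand $K_{d,S}(x,x)$ is nonnegative, so
\[
 \int_{B(p,R)}K_{d,S}(x,x)\,dV_g\leq\int_{B(p,S)}K_{d,S}(x,x)\,dV_g=\sum_{a=1}^{N_d}\int_{B(p,S)}|f_a|^2\,dV_g=N_d.
\]
The last equality holds because the $f_a$ are orthonormal for $\ip{\cdot}{\cdot}_S$. Combining this with the bound on $N_d$ gives \eqref{eq:global-trace}.

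For \eqref{eq:global-BT}, fix $R>0$ and $0<\delta\leq1/2$. Apply \eqref{eq:global-trace} with $d=\delta^{-1}$ and $S=(1+\delta)R\geq R$. This gives the bound $C(n)(1+\delta^{-1})^n$, uniformly in $R$. Since $\delta\leq1/2$, we have $\delta^{-1}\geq2$, so $1+\delta^{-1}\leq\tfrac32\delta^{-1}$. The bound therefore becomes $C(n)(3/2)^n\delta^{-n}$, and the factor $(3/2)^n$ is absorbed into the constant. Taking the supremum over $R$ finishes the proof.

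The only point needing care is the first step. $K_{d,S}$ is well defined because $\OO_d(M)$ is finite-dimensional, and that follows from Theorem~\ref{thm:main} itself, which was proved without assuming finite dimensionality. Positive definiteness of $\ip{\cdot}{\cdot}_S$ follows from the identity theorem, as the paper notes. Beyond this there is no real obstacle.
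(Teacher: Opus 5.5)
Your proposal is correct and follows the paper's proof: orthonormality gives $\int_{B(p,S)}K_{d,S}(x,x)\,dV_g=N_d$, you restrict to $B(p,R)$ using nonnegativity of the integrand, and then you apply Theorem~\ref{thm:main}. For \eqref{eq:global-BT} you use $1+\delta^{-1}\leq\tfrac32\delta^{-1}$ where the paper uses $1+\delta^{-1}\leq2\delta^{-1}$; this only changes the constant.
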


\begin{proof}
Orthonormality gives
\[
 \int_{B(p,S)}K_{d,S}(x,x)\,dV_g(x)=N_d.
\]
Restrict the integral to $B(p,R)$ and apply Theorem~\ref{thm:main}. For \eqref{eq:global-BT}, use $1+\delta^{-1}\leq2\delta^{-1}$.
\end{proof}

This is a growth-cutoff estimate for global functions. It is not an estimate for the full local Bergman space
\[
 A^2(B(p,S))=\{f\in\OO(B(p,S)):\int_{B(p,S)}|f|^2\,dV_g<\infty\}.
\]
In fact, its kernel satisfies $K_{d,S}\leq K_{A^2(B(p,S))}$ on the ball, which is the wrong direction for obtaining an upper bound for the local kernel. No passage $d\to\infty$ in \eqref{eq:global-trace} proves such an upper bound.

\subsection{Algebraic dependence}

Let
\[
 \OO_{\mathrm{poly}}(M)=\bigcup_{d\geq0}\OO_d(M).
\]
It is an integral domain, since $M$ is connected.

\begin{corollary}\label{cor:algebraic}
Under the assumptions of Theorem~\ref{thm:main},
\begin{equation}\label{eq:transdeg}
 \trdeg_{\C}\Frac\OO_{\mathrm{poly}}(M)\leq n.
\end{equation}
\end{corollary}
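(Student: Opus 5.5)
Suppose the transcendence degree is at least $n+1$; we derive a contradiction with Theorem~\ref{thm:main} by counting monomials. First, the fraction field $\Frac\OO_{\mathrm{poly}}(M)$ is generated by elements of $\OO_{\mathrm{poly}}(M)$. Hence any transcendence basis can be chosen inside $\OO_{\mathrm{poly}}(M)$: a maximal algebraically independent subset of the generating set $\OO_{\mathrm{poly}}(M)$ is a transcendence basis of the fraction field. So if \eqref{eq:transdeg} failed, we could find $h_1,\ldots,h_{n+1}\in\OO_{\mathrm{poly}}(M)$ that are algebraically independent over $\C$. Each $h_i$ lies in some $\OO_{d_i}(M)$, so all of them lie in $\OO_{d_0}(M)$ with $d_0=\max_i d_i\geq0$. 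We may take $d_0\geq1$, since $\OO_{d}(M)\subset\OO_{d'}(M)$ for $d\leq d'$.

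Next we use multiplicativity of growth. If $f\in\OO_{d}(M)$ and $g\in\OO_{d'}(M)$, then $|fg|\leq C_fC_g(1+r)^{d+d'}$, so $fg\in\OO_{d+d'}(M)$. Consequently, for every integer $k\geq0$, each monomial $h^\alpha=h_1^{\alpha_1}\cdots h_{n+1}^{\alpha_{n+1}}$ with $|\alpha|\leq k$ belongs to $\OO_{kd_0}(M)$. Algebraic independence says exactly that these monomials are linearly independent over $\C$. Therefore
\[
 \binom{n+1+k}{n+1}\leq\dim_{\C}\OO_{kd_0}(M).
\]
Theorem~\ref{thm:main} bounds the right-hand side by $C(n)(1+kd_0)^n$.

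The left-hand side is at least $k^{n+1}/(n+1)!$, while the right-hand side is $O_{n,d_0}(k^n)$. Letting $k\to\infty$ gives a contradiction, which proves \eqref{eq:transdeg}.

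There is no real obstacle here. The only points to state carefully are the reduction to algebraically independent elements of $\OO_{\mathrm{poly}}(M)$ rather than of the fraction field, and the additivity of growth degrees under multiplication. Both are elementary. The rest is the standard comparison between a polynomial count of degree $n+1$ and the degree-$n$ bound from \eqref{eq:main}.
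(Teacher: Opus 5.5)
Your proof is correct and uses the same monomial-counting argument against Theorem~\ref{thm:main} as the paper. The only difference is the reduction step: you first extract algebraically independent elements of $\OO_{\mathrm{poly}}(M)$ itself by taking a transcendence basis from a generating set, whereas the paper works directly with fractions $u_j=a_j/b_j$ and clears denominators by counting the $(l+1)^k$ products $\prod_j a_j^{\alpha_j}b_j^{l-\alpha_j}\in\OO_{kDl}(M)$.
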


\begin{proof}
Suppose that $u_1,\ldots,u_k$ are algebraically independent elements of the fraction field. Write $u_j=a_j/b_j$, where $a_j,b_j\in\OO_D(M)$ for a common $D\geq1$, and no $b_j$ is identically zero. For each positive integer $l$, the $(l+1)^k$ functions
\[
 \prod_{j=1}^k a_j^{\alpha_j}b_j^{l-\alpha_j},
 \qquad 0\leq\alpha_j\leq l,
\]
belong to $\OO_{kDl}(M)$ and are linearly independent. Indeed, division by $\prod_jb_j^l$ in the fraction field turns a linear relation into a polynomial relation among the $u_j$. Theorem~\ref{thm:main} therefore gives
\[
 (l+1)^k\leq C(n)(1+kDl)^n.
\]
Letting $l\to\infty$ proves $k\leq n$.
\end{proof}

The same algebraic counting argument was used under stronger curvature assumptions in \cite[Corollary~5]{Liu16}. Corollary~\ref{cor:algebraic} is only a transcendence-degree statement. It does not assert finite generation of $\OO_{\mathrm{poly}}(M)$.

\subsection{Two model calculations}

\begin{example}[Euclidean homogeneous spaces]\label{ex:euclidean}
Let $M=\C^n$ with its Euclidean metric and base point $0$. Let $V$ be the span of all monomials $z^\alpha$ with $|\alpha|\leq D$, where $D$ is a nonnegative integer, and use these monomials as the basis $F$. Then
\begin{equation}\label{eq:euclidean-beta}
 \beta_F(t)=\beta_F(1)+\frac{nD}{n+1}\log t.
\end{equation}
\end{example}

\begin{proof}
The measure $\nu_t$ is the image of $\nu_1$ under $z\mapsto\sqrt t\,z$. If $B_t$ is the diagonal matrix with entries $t^{|\alpha|/2}$, then $F(\sqrt t\,z)=B_tF(z)$. Put
\[
 N=\binom{n+D}{n},\qquad q_D=\sum_{|\alpha|\leq D}|\alpha|.
\]
For $A\in\Pn$, the matrix $t^{-q_D/N}B_tAB_t$ has determinant one. Changing variables in the heat average and then in the minimizing matrix shows
\[
 \beta_F(t)=\beta_F(1)+(q_D/N)\log t.
\]
The elementary identities
\[
 q_D=\sum_{j=0}^Dj\binom{n+j-1}{n-1}
 =n\binom{n+D}{n+1},\qquad
 q_D/N=\frac{nD}{n+1}
\]
prove \eqref{eq:euclidean-beta}. Thus the slope of the balanced functional records the average homogeneous degree in this example.
\end{proof}

\begin{example}[Zero asymptotic volume ratio]\label{ex:collapse}
Let $1\leq k<n$, and let $Y^{n-k}$ be a compact connected K\"ahler manifold with nonnegative Ricci curvature. Endow $M=\C^k\times Y$ with the product metric. Every holomorphic function on $M$ is constant on each compact fiber, so
\begin{equation}\label{eq:product-dimension}
 \OO_d(M)\simeq\OO_d(\C^k),\qquad
 \dim_{\C}\OO_d(M)=\binom{k+\lfloor d\rfloor}{k}.
\end{equation}
The product distance and compactness of $Y$ show that $V_p(R)\asymp R^{2k}$ for large $R$. In particular, $R^{-2n}V_p(R)\to0$. Theorem~\ref{thm:main} therefore includes manifolds with zero asymptotic volume ratio.
\end{example}

\subsection{The scope of the estimate}
\label{subsec:scope}
The range of the growth order in \cite[Question~1]{Liu21}
requires a clarification. With the definition
\eqref{eq:holomorphic-space}, the space $\OO_d(M)$ contains the
constant functions for every $d\geq0$. Consequently, an estimate
\[
 \dim_{\C}\OO_d(M)\leq C(n)d^n
\]
cannot hold with the same finite constant $C(n)$ for all real
$d>0$: its left-hand side is at least one, whereas its right-hand
side tends to zero as $d\downarrow0$. This obstruction already
occurs on $\C^n$. It concerns the constant functions at small
growth orders, rather than the power of the estimate as
$d\to\infty$.

Under the assumptions of Theorem~\ref{thm:main}, the small-growth
range can in fact be described exactly. Combining this fact with
the main theorem gives the following formulation.

\begin{corollary}\label{cor:normalized-question}
Let $M^n$ be a complete connected noncompact K\"ahler manifold
with $\Ric_g\geq0$. Then
\begin{equation}\label{eq:scope-small-growth}
 \OO_d(M)=\C,\qquad 0\leq d<1.
\end{equation}
Moreover, there is a constant $C(n)>0$ such that
\begin{equation}\label{eq:scope-quotient}
 \dim_{\C}\bigl(\OO_d(M)/\C\bigr)
 =\dim_{\C}\OO_d(M)-1
 \leq C(n)d^n,\qquad d>0.
\end{equation}
Equivalently, if $p\in M$ is fixed and
\begin{equation}\label{eq:scope-based-space}
 \OO_d^0(M,p)=\{f\in\OO_d(M):f(p)=0\},
\end{equation}
then
\begin{equation}\label{eq:scope-based-bound}
 \dim_{\C}\OO_d^0(M,p)\leq C(n)d^n,\qquad d>0.
\end{equation}
In particular, after changing the dimensional constant if
necessary,
\begin{equation}\label{eq:scope-all-orders}
 \dim_{\C}\OO_d(M)\leq 1+C(n)d^n,\qquad d\geq0.
\end{equation}
\end{corollary}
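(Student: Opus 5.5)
The plan has three parts: prove the small-growth statement \eqref{eq:scope-small-growth} directly, reduce \eqref{eq:scope-quotient} to Theorem~\ref{thm:main} by splitting into $0<d<1$ and $d\geq1$, and read off the remaining claims as reformulations.

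\emph{Small growth.} For \eqref{eq:scope-small-growth}, let $f\in\OO_d(M)$ with $0\leq d<1$. The real and imaginary parts of $f$ are harmonic, and $|f|\leq C_f(1+r)^d$. Fix $x\in M$ and apply the Cheng--Yau gradient estimate \cite{ChengYau75} on $B(x,R)$. We may use it in the form $|du|(x)\leq C(n)R^{-1}\sup_{B(x,R)}|u|$ for harmonic $u$; this is legitimate because $\Ric\geq0$, after shifting $u$ by its supremum to make it positive. Since $\sup_{B(x,R)}|f|\leq C_f(1+r(x)+R)^d$, this gives
\[
|df|(x)\leq C(n)C_fR^{-1}(1+r(x)+R)^d\longrightarrow0\qquad (R\to\infty).
\]
Hence $df\equiv0$, and $f$ is constant because $M$ is connected. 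This is the remark already recorded after the proof of Theorem~\ref{thm:main}; here it only needs to be spelled out.

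\emph{The quotient bound.} For \eqref{eq:scope-quotient}, the equality $\dim(\OO_d/\C)=\dim\OO_d-1$ holds because $\C\subset\OO_d(M)$ and $\dim\OO_d(M)<\infty$ by Theorem~\ref{thm:main}. For the inequality, split into two cases.
\begin{itemize}
\item If $0<d<1$, the left side is $0$ by the first step, so any $C(n)>0$ works.
\item If $d\geq1$, Theorem~\ref{thm:main} gives $\dim\OO_d-1\leq C(n)(1+d)^n\leq 2^nC(n)d^n$.
\end{itemize}
Taking the constant $2^nC(n)$ covers both cases.

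\emph{Equivalent forms.} The based-space statement \eqref{eq:scope-based-bound} follows because $f\mapsto f-f(p)$ induces an isomorphism $\OO_d^0(M,p)\cong\OO_d(M)/\C$. Indeed, $\OO_d(M)=\C\oplus\OO_d^0(M,p)$ via the evaluation map at $p$, which is surjective since constants lie in $\OO_d$. Finally, \eqref{eq:scope-all-orders} is immediate from \eqref{eq:scope-quotient} for $d>0$; at $d=0$ we have $\OO_0(M)=\C$ by the first step, so the dimension is $1\leq1+0$.

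The only step with any content is the gradient estimate in the sublinear case. The main point to check there is that the Cheng--Yau estimate is applied on balls centered at $x$ whose radius tends to infinity, with the sup bound controlled by $(1+r(x)+R)^d$, so that $R^{d-1}\to0$ exactly because $d<1$. Everything else is bookkeeping of constants.
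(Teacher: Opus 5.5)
Your proposal is correct and follows the paper's proof essentially step for step: the sublinear case via the Cheng--Yau gradient estimate on balls of radius $R\to\infty$, the split of the quotient bound into $0<d<1$ and $d\geq1$ with constant $2^nC(n)$, and the decomposition $\OO_d(M)=\C\oplus\OO_d^0(M,p)$ via evaluation at $p$. The only differences are cosmetic: the paper uses $B(q,2R)$ with the bound $(1+d_g(p,q)+2R)^d$, and it leaves implicit the point you make explicit, namely that finite dimensionality from Theorem~\ref{thm:main} justifies $\dim_{\C}(\OO_d(M)/\C)=\dim_{\C}\OO_d(M)-1$.
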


\begin{proof}
Let $0\leq d<1$ and $f\in\OO_d(M)$. The real and imaginary
parts of $f$ are harmonic. The local gradient estimate of
Cheng--Yau \cite{ChengYau75}, applied to these harmonic functions
after adding suitable constants on each ball, gives
\[
 |df|(q)
 \leq \frac{C(n)}{R}\sup_{B(q,2R)}|f|
 \leq \frac{C(n)C_f}{R}
       \bigl(1+d_g(p,q)+2R\bigr)^d
\]
for every fixed $q\in M$ and every $R>0$. Letting $R\to\infty$
yields $df(q)=0$. Since $q$ is arbitrary and $M$ is connected,
$f$ is constant. This proves \eqref{eq:scope-small-growth}.

For $d\geq1$, Theorem~\ref{thm:main} gives
\[
 \dim_{\C}\OO_d(M)-1
 \leq \dim_{\C}\OO_d(M)
 \leq C(n)(1+d)^n
 \leq 2^nC(n)d^n.
\]
For $0<d<1$, the left-hand side is zero by
\eqref{eq:scope-small-growth}. These two cases prove
\eqref{eq:scope-quotient} with one dimensional constant.

Evaluation at $p$ gives the direct sum
\[
 \OO_d(M)=\C\oplus\OO_d^0(M,p),
 \qquad
 f=f(p)+(f-f(p)).
\]
Thus \eqref{eq:scope-based-bound} is equivalent to
\eqref{eq:scope-quotient}. Adding back the constant functions
proves \eqref{eq:scope-all-orders} for $d>0$; the case $d=0$
follows from \eqref{eq:scope-small-growth}.
\end{proof}

Accordingly, Theorem~\ref{thm:main} gives an affirmative answer
to the large-growth-order formulation of
\cite[Question~1]{Liu21}, namely
\[
 \dim_{\C}\OO_d(M)\leq C(n)d^n,\qquad d\geq1,
\]
without assuming Euclidean volume growth. For all real $d>0$,
the corresponding statement is
\eqref{eq:scope-quotient}, or equivalently
\eqref{eq:scope-all-orders}. The formula in Question~1, read
literally with a constant independent of $d$ on the whole
interval $(0,\infty)$, must therefore be adjusted by retaining
the contribution of the constant functions.

The power remains sharp after this adjustment. Indeed,
\eqref{eq:euclidean-dimension} gives
\[
 \dim_{\C}\bigl(\OO_d(\C^n)/\C\bigr)
 =\binom{n+\lfloor d\rfloor}{n}-1
 =\frac{d^n}{n!}+O_n(d^{n-1})
 \qquad\text{as }d\to\infty.
\]
Hence no uniform estimate with power $n-\varepsilon$,
$\varepsilon>0$, can replace \eqref{eq:scope-quotient}.
The present argument does not determine the optimal
dimensional constant or the optimal asymptotic leading
coefficient. It also does not establish the precise Euclidean
dimension comparison or a rigidity statement for an equality
case.


\end{document}